\DeclareMathOperator\rad{rad}
\DeclareMathOperator\Hom{Hom}
\DeclareMathOperator\im{im}
\DeclareMathOperator\Ob{Ob}
\DeclareMathOperator\id{id}
\DeclareMathOperator\st{SYT}
\DeclareMathOperator\dom{dom}
\newtheorem{mythm}{Theorem}[section]
\newtheorem{mylem}[mythm]{Lemma}
\newtheorem{myprop}[mythm]{Proposition}
\newtheorem{mycor}[mythm]{Corollary}
\numberwithin{equation}{section}
\newcommand{\Z}{\mathbb{Z}}
\newcommand{\C}{\mathbb{C}}
\newcommand{\cat}{\mathcal{C}}
\newcommand{\gvsp}{\text{gVect}_\C}
\newcommand{\cgmod}{\cat\text{-gMod}}
\newcommand{\vsp}{\text{Vect}_\C}
\newcommand{\cmod}{\cat\text{-Mod}}
\newcommand{\mmod}{\text{-Mod}}
\newcommand{\inj}{\mathscr{I}}
\begin{document}
\title[Young lattice, injections between finite sets, and Koszulity]
{Incidence category of the Young lattice,\\ injections between finite sets, and Koszulity}
\author[B.~Dubsky]{Brendan Dubsky\\brendan.frisk.dubsky@math.uu.se}

\begin{abstract}
We study the quadratic quotients of the incidence category of the Young lattice defined using
the zero relations corresponding to adding two boxes to the same row, or to the same column or both. We
show that the latter quotient corresponds to the Koszul dual of the original incidence category
while the first two quotients are, in a natural way, Koszul duals of each other, hence they are in particular Koszul self-dual. Both of these two  quotients are known to be basic representatives in 
the Morita equivalence class of the category of injections between finite sets. We also present
a new, rather direct, argument establishing this Morita equivalence.
\end{abstract}

\maketitle

\section{Introduction}\label{s1}

In \cite{Po95} it was proved that the incidence algebra of a finite poset is Koszul if and
only if every open interval in this poset is Cohen-Macaulay (over the ground field). This
description is easily extendable to some locally finite infinite posets. A classical example
of an infinite Cohen-Macaulay poset is the classical Young lattice, see \cite{BS05}. The original
motivation for the present paper is to understand Koszulity of the incidence algebra of both the
Young lattice, and some of its quadratic quotients, in a direct way.

Our starting observation was that the Koszul dual of the incidence category of the Young lattice 
is a non-trivial quotient of this incidence category. In fact, in Proposition~\ref{ppp1} we show that 
the Koszul dual of the incidence category of the Young lattice is isomorphic to the quotient
of this incidence category by the ideal generated by all zero relations which correspond to 
adding two boxes to the same row or column of a Young diagram. The obvious symmetry considerations
then suggest that the quotient of incidence category of the Young lattice by the ideal generated
by all zero relations which correspond to  adding two boxes to the same row a Young diagram
should be Koszul self-dual or, more naturally, Koszul dual to the similar quotient for which
``the same row'' is changed to ``the same column''. In Theorem~\ref{selfdualthm} we prove that
this is indeed the case. Moreover, in Theorem \ref{koszthm} of Section \ref{s3} we construct explicit linear 
resolutions of simple modules for this Koszul self-dual category, which we denote by $\mathcal{C}$
in the paper.

The study of Koszul algebra originates in \cite{Pr70} and extends to the level of derived categories
in  \cite{BGS96}. In \cite{MOS09}, the Koszul duality was extended to positively graded categories
which provides us with a suitable setup for the present paper.

As it turns out, the category $\mathcal{C}$ appears naturally as the basic representative of 
the Morita equivalence class of the category whose objects are all finite sets and morphisms 
are all injections between finite sets, see for example the recent paper \cite{SS16}. 
We reprove this result in Theorem~\ref{quiverthm}, in a much more elementary way based on 
basic representation theory of finite symmetric groups. The Koszulity of $\mathcal{C}$
has also been addressed in \cite{SS16} and, even more recently, in  \cite{GL16} using completely different
approaches and without explicitly constructing projective resolutions of simple modules. 

As the category of injections between finite sets seems to be a more natural object than the
Young lattice, we try to present our results from the perspective of the former category.
Therefore, let us make some more detailed comments on this category.

Consider the category $\inj$ with objects $\underline{n}$ for $n\in\Z_{\ge 0}$, where 
$\underline{0}=\varnothing$ and otherwise $\underline{n}=\{ 1,\dots, n\}$, and whose morphisms 
are injections between these sets. The category $\inj$ is a skeletal subcategory 
of the category of injections between finite sets. Clearly,
each endomorphism semigroup of $\inj$ is isomorphic to some symmetric group $S_n$. 
There is a natural $\Z_{\ge 0}$-grading on $\inj$ (and hence on its linearization 
$\C\inj$) given by letting the degree of an injection $i\in \inj(\underline{n},\underline{n+m})$ be $m$.

It is also worth to note that the representation theory of the category of 
injections between finite sets has been studied in 
\cite{CEF15} and \cite{CEFN14}, where, in particular, several examples of how modules over 
this category arise in applications can be found.  The Gabriel quiver (without relations) of the 
linearized category $\C\inj$ was described already in \cite{Br11}, and later in the much 
more general setting of rectangular monoids in \cite{MS12}. Their approach
was used by in \cite{St16} to describe the Gabriel quiver of the category of surjections 
between finite sets as well as some related categories. 

The connection of our results to the results of \cite{SS16} was  brought to the author's attention
after publication of the original version of this paper on the arxiv.
\vspace{5mm}

\subsection*{Acknowledgements}
The author is very grateful to his advisor Volodymyr Mazorchuk for many valuable insights and patient guidance during the project that resulted in the present paper. 
The author would also like to thank Steven Sam for information about the results of  \cite{SS16}.

\section{Notation}
In this section we collect notation that will be used throughout the paper. 

\subsection{Miscellaneous}
We write $\Z_{\ge 0}=\{0,1,2,\dots\}$ for the set of non-negative integers.

For a function $f$, we by $\dom(f)$ denote the domain of $f$, and by $\im(f)$ the image of $f$. 

Any tensor products not specified via subscripts will be assumed to be taken over $\C$.

For a finite-dimensional complex vector space $V$, we denote by $V^*=\Hom(V,\C)$ the usual dual vector space. If $\{v_1,\dots,v_n\}$ is a basis for $V$, then the $v_i^*\in V^*$ defined by 
\begin{equation*}
v_i^*(v_j)=\begin{cases}
        1, & \mbox{if } i=j,\\
        0, & \mbox{ otherwise,}
        \end{cases}
\end{equation*}
constitute a basis for $V^*$. 

\subsection{The symmetric group}

For a finite set $A$ we denote by $S_A$ the symmetric group of permutations of the elements of $A$. In the special case $A=\underline{n}$ for $n\in\\Z_{\ge 0}$, we simply write $S_n$ instead of $S_{\underline{n}}$. For a subset $B\subset A$ we will often identify $S_B$ with the subgroup of $S_A$ consisting of all $\pi\in S_A$ such that $\pi_{|A\backslash B}=\id$, the identity function. A similar identification of group algebras will be done for $\C[S_B]$ as a subalgebra of $\C[S_A]$. 

By the (possibly decorated) letters $\lambda,\mu,\nu$ we will denote Young diagrams, i.e. arrangements of left-justified rows of nodes (drawn as square ``boxes''), where the length of the rows are weakly decreasing, and $\lambda\vdash n\in\Z_{\ge 0}$ means that $\lambda$ consists of $n$ nodes in total, with the convention that $\varnothing\vdash 0$. We write $\lambda=(n_1,n_2,\dots,n_k)$ if $\lambda$ has $n_i$ nodes in its $i$:th row (and no nodes in rows $>k$). By $\mu^T$ we mean the transposed diagram, which has each row of $\mu$ as a column, and vice versa. 

If it is possible to adjoin a new node to a Young diagram and thereby again obtain a Young diagram, we say that this node is \emph{addable}. Whenever we speak of adding a node to a Young diagram, the node is implicitly understood to be addable. We will call a quadruple $(\lambda_1,\lambda_2,\lambda_3,\lambda_4)$ a \emph{diamond} if $\lambda_2$ and $\lambda_3$ are different and can be obtained from $\lambda_1$ by adding one node, and $\lambda_4$ can be obtained from $\lambda_2$ and $\lambda_3$ by adding one node. We write $\mu\rightarrow\lambda$ if $\lambda$ can be obtained from $\mu$ by adding nodes, and $\mu\xrightarrow{2}\lambda$ if $\lambda$ can obtained from $\mu$ by adding nodes, at least two of which to the same column.

We use $t_\lambda$ to denote a Young tableau of shape $\lambda\vdash n$, i.e. an object obtained by entering the numbers $1,\dots, n$ into the nodes of $\lambda$. Tableaux whose entries increase along rows and columns are called standard, and by $\st(\lambda)$ we denote the set of all standard tableaux of shape $\lambda$. By $t^1_\mu$ we denote the element of $\st(\mu)$ obtained by by entering the numbers $1,\dots,n$ into $\mu$ from top to bottom and from left to right (actually, any fixed element of $\st(\mu)$ would do for our purposes).

By $S^\lambda$ we denote the Specht module corresponding to $\lambda$. See Subsection \ref{s22} for a more detailed discussion. 

\subsection{Modules over categories and categories of modules}
\label{ss23}

All vector spaces considered in the present paper will be complex, and, unless otherwise specified, linear will always mean $\C$-linear. We denote by $\vsp$ the category of vector spaces and linear maps, and by $\gvsp$ the category of graded vector spaces and graded linear maps. 

For a small category $\mathcal{E}$, we by $\C\mathcal{E}$ denote the \emph{$\C$-linearization} of $\mathcal{E}$. This is the $\C$-linear category that has the same objects as $\mathcal{E}$, but with morphism space $\C\mathcal{E}(X,Y)$ being the vector space with basis $\mathcal{E}(X,Y)$, and where composition of morphisms is defined by bilinearly extending the composition in $\mathcal{E}$. 

We by $\rad(\C\mathcal{E})$ denote the radical of the category $\C\mathcal{E}$, which is the two-sided ideal of $\C\mathcal{E}$ defined by
\begin{equation*}
\rad(\C\mathcal{E})(X,Y)=\{h\in\C\mathcal{E}(X,Y)|1_X-g\circ h\text{ is invertible for all }g\in\C\mathcal{E}(Y,X)\}
\end{equation*}
We note in passing that, as in the case of algebras, the radical may also be defined as the intersection of the annihilators of all simple modules. 

A (left) $\mathcal{E}$-module is a (covariant) $\C$-linear functor 
\begin{equation*}
F:\C\mathcal{E}\rightarrow\vsp. 
\end{equation*}
By $\mathcal{E}\mmod$ we denote the category of $\mathcal{E}$-modules and natural transformations between these. 

If $\mathcal{E}$ is $\mathbb{Z}$-graded, then a correspondingly graded $\mathcal{E}$-module is a $\C$-linear functor 
\begin{equation*}
F:\C\mathcal{E}\rightarrow\gvsp 
\end{equation*}
which preserves the degrees of morphisms. We denote by $\mathcal{E}\text{-gMod}$ the category of graded $\mathcal{E}$-modules and degree-preserving natural transformations between these.

We also define a right $\mathcal{E}$-module to be a contravariant $\C$-linear functor 
\begin{equation*}
G:\C\mathcal{E}\rightarrow\vsp. 
\end{equation*}
Along the same lines one defines right graded $\mathcal{E}$-modules, as well as the categories $\text{Mod-}\mathcal{E}$ and $\text{gMod-}\mathcal{E}$ of ungraded and graded right $\mathcal{E}$-modules respectively. 

We will often identity a left $\mathcal{E}$-module $M$ with the vector space 
\begin{equation*}
\bigoplus_{X\in\mathcal{E}}M(X)
\end{equation*}
together with the $\mathcal{E}$-action given by 
\begin{equation*}
f\cdot v=M(f)(v),
\end{equation*}
where the domain of $f\in\mathcal{E}(X,Y)$ is extended to the vector space $M$ by setting $M(f)(M(Z))=0$ for summands $M(Z)\ne M(X)$. A similar identification may be done for right $\mathcal{E}$-modules. 

For a left ideal $\mathcal{J}\subset \mathcal{E}$ we obtain an obvious ideal $\C\mathcal{J}\subset\C\mathcal{E}$, and we may define the (possibly graded) left $\mathcal{E}$-module
\begin{equation*}
^\mathcal{E}\mathcal{J}=\bigoplus_{X\in\mathcal{E}}\C\mathcal{J}(X,\_),
\end{equation*}
and similarly a right ideal $\mathcal{J}\subset \mathcal{E}$ lets us define the right $\mathcal{E}$-module
\begin{equation*}
\mathcal{J}^\mathcal{E}=\bigoplus_{X\in\mathcal{E}}\C\mathcal{J}(\_,X).
\end{equation*}

Let $n$ range over $\Z$ and $X$ over the objects of $\cat$. We define the \emph{degree shift} functors 
\begin{equation*}
\langle n\rangle:\cgmod\rightarrow \cgmod
\end{equation*}
on objects $M\in\cgmod$ by 
\begin{equation*}
M\langle n\rangle(X)_m=M(X)_{n+m}
\end{equation*}
for all $m\in\Z$ (and in the obvious way on morphisms).

\section{Quiver of the category of injections between finite sets}\label{s2}

\subsection{Description of the quiver of $\C\inj$}
Let $\mathcal{C}'$ be (the incidence category of) the Young lattice, i. e. the poset consisting of the Young diagrams, ordered by $\mu>\lambda$ if $\mu\rightarrow\lambda$. 

\begin{figure}
\includegraphics{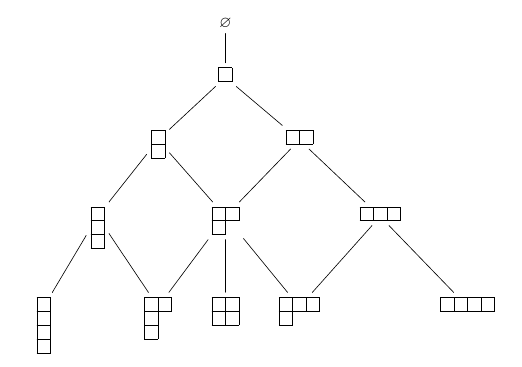}
\caption{The Young lattice truncated after diagrams of size four.}
\end{figure}

Let $Q$ be the (quiver given by the) Hasse diagram of $\mathcal{C}'$, i.e. the set of nodes consists of the objects of $\mathcal{C}'$, with an arrow going from node $\mu$ to node $\lambda$ if and only if $\lambda$ is obtained from $\mu$ by adding a node.

Finally, consider the ideal $\mathcal{I}=\bigcup_{\mu\xrightarrow{2}\lambda}\mathcal{C}'(\mu,\lambda)$, and let $\mathcal{C}=\mathcal{C}'/\mathcal{I}$.

Our first main result reads as follows.

\begin{mythm}
\label{quiverthm}
The Gabriel quiver of $\C\inj$ is $Q$, and $\C\inj$ is Morita equivalent to $\C\mathcal{C}$, i.e. $\C\inj\text{-Mod}\cong \C\cmod$.
\end{mythm}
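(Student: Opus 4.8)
The plan is to compute the Gabriel quiver of $\C\inj$ directly from its structure as a linearized category and then match the result with $\C\cat$. I would proceed in the following steps.

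\textbf{Step 1: Reduce to a basic category via idempotent decomposition.} Each $\inj(\underline{n},\underline{n})\cong S_n$, so $\C\inj(\underline{n},\underline{n})\cong\C[S_n]$, which is semisimple with simple modules indexed by the Specht modules $S^\lambda$, $\lambda\vdash n$. I would choose a complete set of primitive orthogonal idempotents $e_\lambda\in\C[S_n]$ (one for each $\lambda\vdash n$, coming from a decomposition of the regular representation), giving the objects of a basic category equivalent to $\C\inj$ indexed by Young diagrams. This gives the Morita equivalence $\C\inj\text{-Mod}\cong\C\cat\text{-Mod}$ as soon as we identify the basic category with $\C\cat$, so most of the work is in identifying morphism spaces.

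\textbf{Step 2: Compute $\Hom$-spaces between the images of the idempotents.} For $\mu\vdash n$ and $\lambda\vdash n+m$, the space $e_\lambda\,\C\inj(\underline{n},\underline{n+m})\,e_\mu$ is what governs both arrows and relations. Using the decomposition of $\C\inj(\underline{n},\underline{n+m})$ as a $(\C[S_{n+m}],\C[S_n])$-bimodule — it is the induced module $\C[S_{n+m}]\otimes_{\C[S_m]}\C$ viewed appropriately, since an injection $\underline{n}\hookrightarrow\underline{n+m}$ is a choice of image plus a bijection — I would apply the branching rule (Pieri rule) for restriction/induction between symmetric groups. The multiplicity of $S^\lambda$ in $\mathrm{Ind}_{S_n\times S_m}^{S_{n+m}}(S^\mu\boxtimes \mathrm{triv})$ is the number of ways of obtaining $\lambda$ from $\mu$ by adding $m$ nodes no two in the same column (a standard consequence of the Pieri rule), which is exactly $\dim\cat'(\mu,\lambda)$ after quotienting by $\inj$. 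Hence $\dim e_\lambda\,\C\inj(\underline n,\underline{n+m})\,e_\mu = \dim\C\cat(\mu,\lambda)$.

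\textbf{Step 3: Identify the Gabriel quiver and the multiplicative structure.} The radical $\rad(\C\inj)$ is spanned by the positive-degree morphisms (degree $\ge 1$), since the degree-zero part is semisimple; the arrows of the Gabriel quiver correspond to a basis of $\rad/\rad^2$ between the idempotent images. Counting via Step 2 with $m=1$: adding one node can never put two nodes in the same column, so $\dim e_\lambda\,\C\inj\,e_\mu = 1$ when $\lambda$ covers $\mu$ in the Young lattice and $0$ otherwise; this shows the quiver is $Q$. To pin down the relations, I would show $\rad^k/\rad^{k+1}$ between $e_\mu$ and $e_\lambda$ (with $|\lambda|-|\mu|=k$) has dimension equal to the number of paths $\mu\to\lambda$ in $\cat$, i.e. the number of ways to add $k$ nodes, no two in the same column — matching Step 2 — so the kernel of $\C Q\twoheadrightarrow \C\inj_{\mathrm{basic}}$ is precisely the ideal generated by the "two nodes in the same column" relations, which is $\C\mathcal{I}$. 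Concretely one checks that for a diamond $(\lambda_1,\lambda_2,\lambda_3,\lambda_4)$ where $\lambda_2,\lambda_3$ differ from $\lambda_1$ by adding nodes in distinct columns, the two length-two paths are linearly independent, while when the two added nodes lie in the same column the corresponding composite is $0$; this is a Pieri-rule computation in $\C[S_{n+2}]$.

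\textbf{Main obstacle.} The delicate point is Step 3: knowing the dimensions of all $\Hom$-spaces and of the radical layers is not quite enough — one must verify that the surjection $\C Q/\C\mathcal{I}\to\C\inj_{\mathrm{basic}}$ is genuinely an isomorphism, i.e. that there are no "hidden" relations beyond the column relations and that the obvious map is surjective onto each radical layer. The natural way is a dimension count: show $\dim(\C Q/\C\mathcal{I})(\mu,\lambda)$ equals the Pieri-rule count from Step 2 for all $\mu,\lambda$. This amounts to the combinatorial identity that the number of chains $\mu=\nu_0\lessdot\nu_1\lessdot\cdots\lessdot\nu_k=\lambda$ in the Young lattice that avoid adding two nodes to one column, counted modulo the ideal $\mathcal{I}$ (so that two such chains are identified when they factor through a common "column-violating" subchain — here one must be careful that $\mathcal{I}$ as defined already kills all longer paths through such a configuration), equals the number of ways to add $k$ nodes to $\mu$ with no two in the same column to reach $\lambda$. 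I would prove this by induction on $k$ using the recursive structure of both sides under removing the last node, together with the observation that a set of $k$ addable-at-once nodes (no two in a column) can be added in exactly one order-compatible way up to the identifications in $\cat$. Once this combinatorial matching is in place, the isomorphism of categories $\C\cat\cong\C\inj_{\mathrm{basic}}$ follows, and Morita equivalence $\C\inj\text{-Mod}\cong\C\cat\text{-Mod}$ is immediate from Step 1.
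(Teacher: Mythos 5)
Your Steps 1 and 2 follow the paper's route (primitive idempotents from $\C[S_n]$, the bimodule identification of $\C\inj(\underline{n},\underline{n+m})$ with $\mathrm{Ind}_{S_n\times S_m}^{S_{n+m}}(S^\mu\boxtimes\mathrm{triv})$, and the branching/Pieri rule to count arrows and radical layers), and that part is sound. The gap is in Step 3, and it begins with an outright error: for a diamond $(\lambda_1,\lambda_2,\lambda_3,\lambda_4)$ the two added nodes necessarily lie in distinct rows \emph{and} distinct columns, so $\lambda_4\backslash\lambda_1$ is a horizontal strip and by your own Step 2 the space $e_{\lambda_4}\,\C\inj\,e_{\lambda_1}$ is \emph{one-dimensional}. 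The two length-two composites therefore cannot be linearly independent; they are both nonzero and proportional. The relation you must establish to present the basic algebra as $\C Q/\C\mathcal{I}$ (i.e.\ as the incidence category $\C\cat$, where the two paths around every diamond are \emph{equal}) is that the proportionality constant is $1$, and this depends on the choice of arrow representatives. With a generic choice one only gets $p_1=c\,p_2$ for some diamond-dependent $c\in\C^*$, and a dimension count cannot distinguish $\C\cat$ from such a twisted version (compare $xy=qyx$ for varying $q$: same Hilbert series, non-isomorphic algebras). So matching $\dim(\C Q/\C\mathcal{I})(\mu,\lambda)$ against the Pieri count, as in your ``Main obstacle'' paragraph, does not close the argument.

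What is missing is a consistency (gauge-fixing) argument showing that the arrows can be rescaled so that all diamonds commute simultaneously. This is the genuinely delicate part of the paper's proof: after using Schur's lemma plus an explicit computation (the analogue of your Step 2 applied to a composite $E'_\lambda\C\inj E'_\nu\C\inj e'_{t^1_\mu}$, which also establishes that the longer composites are nonzero --- another point you leave implicit) to get $p_1=c\,p_2$ with $c\ne0$, the paper runs an algorithm over the diagrams ordered by dominance, propagating normalized generators $v_{\lambda,\mu_k}$ and using joins in the Young lattice to check that no choice is overwritten inconsistently. You need some version of this --- or an argument that the relevant $2$-cocycle on the Young lattice is a coboundary --- before you can conclude $\C\inj_{\mathrm{basic}}\cong\C\cat$ rather than merely that the two have the same graded dimensions.
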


Note that $\C\mathcal{C}'$ and hence $\C\cat$ are positively graded (in the sense of \cite[Definition 1]{MOS09}), by the length of the corresponding paths in $Q$. From the proof of Theorem \ref{quiverthm}, it is easily seen that the following corollary holds as well.

\begin{mycor}
\label{greqcor}
We have the equivalence of categories $\C\inj\text{-gMod}\cong\C\cgmod$.
\end{mycor}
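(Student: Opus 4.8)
The plan is to upgrade the ungraded Morita equivalence of Theorem \ref{quiverthm} to a graded one by observing that all the functors involved in the proof of that theorem are compatible with the gradings. Recall that $\C\inj$ carries the $\Z_{\ge 0}$-grading described in Subsection \ref{s11}, where a morphism in $\inj(\underline{n},\underline{n+m})$ has degree $m$, and $\C\cat$ carries the grading by path length in $Q$; both are positively graded in the sense of \cite[Definition 1]{MOS09}. The key point is that the Morita equivalence $\C\inj\text{-Mod}\cong\C\cmod$ is implemented by an idempotent-type truncation (passing to a subcategory of representatives of iso-classes of indecomposable projectives, or equivalently tensoring with a suitable bimodule built from the $\C\cat$--$\C\inj$ and $\C\inj$--$\C\cat$ structure), and each such bimodule is itself graded because it is assembled out of morphism spaces of $\C\inj$ and $\C\cat$, all of which are graded.

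Concretely, I would proceed as follows. First, recall from the proof of Theorem \ref{quiverthm} that the Morita equivalence arises because the indecomposable projective $\C\inj$-modules are indexed by Young diagrams (simples of the endomorphism groups $S_n$), with the diagram $\lambda\vdash n$ contributing in ``internal degree $0$'' relative to the object $\underline n$, and that the radical filtration of $\C\inj$ matches the path-length filtration of $\C\cat$ after killing the ideal $\mathcal I$. Second, I would note that the equivalence functor and its quasi-inverse send a graded module to a graded module: the functor is given by evaluation/multiplication against morphisms of $\C\inj$, and since those morphisms are homogeneous of degree equal to the corank of the injection, the induced maps on module components are homogeneous. Hence the equivalence restricts to the subcategories of graded objects and degree-preserving morphisms. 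Third, one checks that the grading induced on $\C\cat$ via this identification is exactly the path-length grading: adding a single node to a Young diagram corresponds to an injection $\underline n\hookrightarrow\underline{n+1}$ of degree $1$, so an arrow of $Q$ has degree $1$, and compositions add degrees, which is precisely the path-length grading. This pins down the grading on both sides and shows the equivalence is degree-preserving, giving $\C\inj\text{-gMod}\cong\C\cgmod$.

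I expect the main (and really only) obstacle to be bookkeeping: one must make sure that the particular choice of bimodule or truncation realizing the equivalence in the proof of Theorem \ref{quiverthm} is genuinely homogeneous, i.e. that no non-homogeneous idempotents or change-of-basis elements are used. Since the primitive idempotents in $\C[S_n]$ cutting out the Specht components all lie in degree $0$ (they are endomorphisms of a single object $\underline n$), and the connecting morphisms between different objects are homogeneous by construction of the grading on $\inj$, this goes through without surprises; one simply observes that every step in the proof of Theorem \ref{quiverthm} respects the decomposition into graded pieces. Thus the corollary follows ``for free'' from the proof, as asserted, and the only content is recording that the grading on $\C\cat$ matching the grading on $\C\inj$ is the path-length grading already noted to make $\C\cat$ positively graded.
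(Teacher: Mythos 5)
Your proposal is correct and follows essentially the same route as the paper, which simply notes that the corollary is ``easily seen'' from the proof of Theorem \ref{quiverthm}: the equivalence is built from idempotents that are homogeneous of degree $0$ (living in endomorphism algebras of single objects) and from morphism spaces of $\C\inj$ that are homogeneous, so it restricts to graded modules, with the induced grading on $\C\cat$ being the path-length grading. Your additional bookkeeping remarks just make explicit what the paper leaves implicit.
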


\subsection{The symmetric group and its simple representations}\label{s22}
According to classical results on the ordinary representation theory of finite groups (cf. e.g. \cite[Section 3.1]{JK81}), there is a decomposition into two-sided ideals of the group algebra of $S_n$
\begin{equation*}
\C[S_n]= \bigoplus_{\mu\vdash n}\C[S_n] E_\mu,
\end{equation*}
where $E_\mu\in \C[S_n]$ is the \emph{centrally primitive idempotent} corresponding to $\mu$. As the terminology suggests, $E_\mu$ is an idempotent and belongs to the center of $\C[S_n]$, so that in particular it acts as the identity on $\C[S_n] E_\mu$. For an explicit formula for $E_\mu$, cf. \cite[formula (1), p. 188]{JK81}.\footnote{The alternative formula given as formula (2) in the same source appears to be erroneous, due to Young symmetrizers not being orthogonal in general.}

We also have, in turn, decompositions of left $\C[S_n]$-modules 
\begin{equation*}
\C[S_n] E_\mu\cong \bigoplus_{t_\mu\in\st(\mu)} \C[S_n] e_{t_\mu},
\end{equation*}
where $e_{t_\mu}$ is the Young symmetrizer corresponding to $t_\mu$, and $\C[S_n] e_{t_\mu}\cong S^\mu$ is simple. 

\subsection{The symmetric group, $\C\inj$ and idempotents}\label{s24}
We will study $\C\inj$ using the representation theory of $S_n$. Note that we have obvious isomorphisms of algebras
\begin{equation*}
\gamma_n:\C[S_n]\xrightarrow{\sim}\C\inj(\underline{n},\underline{n}).
\end{equation*}
In particular we have that $\gamma_n$ induces a left (right) $\C[S_n]$-module structure on any left (right) $\C\inj$-module $M$, namely the module 
\begin{equation*}
\bigoplus_{X\in\C\inj}M(X) 
\end{equation*}
with left (right) multiplication by $s\in\C[S_n]$ given by applying $M(\gamma_n(s))$. Each $\C\inj(\underline{n},\underline{n+m})$ then inherits a $\C[S_{n+m}]$-$\C[S_{n}]$-bimodule structure from $^{\C\inj}{\C\inj}$ and $\C\inj^{\C\inj}$. 

One may heuristically view injections in $\inj(\underline{n},\underline{n+m})$ as permutations in $S_{n+m}$, but where we do not care about where the elements of $\underline{n+m}\backslash\underline{n}$ are sent. This is the same as sending the elements $\underline{n+m}\backslash\underline{n}$ ``everywhere possible at once''. Therefore we also have the isomorphisms of $\C [S_{n+m}]$-$\C [S_n]$-bimodules
\begin{align*}
\iota_{n,m}:M_{n,m}:=\langle \sum_{\pi\in S_{\underline{n+m}\backslash\underline{n}}} \sigma\pi | \sigma\in S_{n+m}\rangle &\xrightarrow{\sim} \C\inj(\underline{n},\underline{n+m})\\
\sum_{\pi\in S_{\underline{n+m}\backslash\underline{n}}} \sigma\pi&\mapsto \sigma_{|\underline{n}}.
\end{align*}
Since $M_{n,m}\subset \C[S_{n+m}]$, we obtain an embedding $\iota_{n,m}^{-1}:\C\inj(\underline{n},\underline{n+m})\rightarrow \C[S_{n+m}]$. 

We will study $\C\inj$ via its idempotents, which correspond to ones in the symmetric group algebra: Write 
\begin{equation*}
e_{t_\mu}'=\gamma_n(e_{t_\mu})
\end{equation*}
and 
\begin{equation*}
E'_{\mu}=\gamma_n(E_\mu), 
\end{equation*}
with $\mu\vdash n$ and $t_\mu\in\st(\mu)$. Every primitive idempotent $e\in\C\inj$ satisfying $\C\inj(\underline{n},\underline{n+1})e\ne 0$ must lie in $\C\inj(\underline{n},\underline{n})\cong \C [S_n]$ (cf. \cite[p. 73 and Theorem 6.3.2]{Br11} for details), and is therefore in fact of the form $e_{t_\mu}'$. Similarly, the primitive idempotents $e\in\C\inj$ satisfying $e\C\inj(\underline{n},\underline{n+1})\ne 0$ are precisely the $e'_{t_\lambda}$ with $\lambda\vdash n+1$.

\subsection{Simples and idecomposable projectives in $\C\inj\text{-Mod}$}\label{s23}
From the classification of primitive idempotents of $\C\inj$ given in the previous subsection, it is immediate that the indecomposable projective $\C\inj$-modules are of the form
\begin{equation*}
P_\mu= {^{\C\inj}(\C\inj e'_{t^1_\mu})}.
\end{equation*} 

The simple top, denoted $L_\mu$, of $P_\mu$ satisfies $\C\inj(\underline{n},\underline{n+m}) L_\mu=0$ for $m>0$ and $L_\mu\cong S^\mu$ as $\C[S_n]$-modules. 

Observe that the same notation will be used in a different way in Section \ref{s3}. 

\begin{mylem}
\label{projeqlem}
Let $\nu=(m)$, and view $\C[S_n]\otimes \C[S_m]$ as a subalgebra of $\C[S_{n+m}]$ in the obvious way. We have the $\C[S_{n+m}]$-module isomorphism
\begin{equation*}
\C[S_{n+m}]P_\mu\cong \C[S_{n+m}]\otimes_{\C[S_n]\otimes \C[S_m]}(S^\mu\otimes S^\nu).
\end{equation*}
\end{mylem}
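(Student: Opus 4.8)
The plan is to translate the statement into one about modules over $\C[S_{n+m}]$ via the bimodule isomorphism $\iota_{n,m}$ of Subsection~\ref{s24}, and then to recognise the left-hand side as a module induced from the subalgebra $\C[S_n]\otimes\C[S_m]$. First I would note that, under the identification of $P_\mu$ with $\bigoplus_X P_\mu(X)$, the operator $\gamma_{n+m}(s)$ kills every homogeneous component except the degree-$m$ one, so that $\C[S_{n+m}]P_\mu$ is simply $P_\mu(\underline{n+m})=\C\inj(\underline n,\underline{n+m})\,e'_{t^1_\mu}$ with its left $\C[S_{n+m}]$-action coming from left composition. Since $\iota_{n,m}$ is an isomorphism of $\C[S_{n+m}]$-$\C[S_n]$-bimodules, and right composition with $e'_{t^1_\mu}$ corresponds under $\iota_{n,m}$ to right multiplication by $e_{t^1_\mu}\in\C[S_n]\subseteq\C[S_{n+m}]$ (immediate from the defining formula for $\iota_{n,m}$, as $e_{t^1_\mu}$ commutes with $S_{\underline{n+m}\setminus\underline n}$), we obtain an isomorphism of left $\C[S_{n+m}]$-modules $\C[S_{n+m}]P_\mu\cong M_{n,m}\,e_{t^1_\mu}$ inside $\C[S_{n+m}]$.

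Next I would rewrite $M_{n,m}\,e_{t^1_\mu}$. Put $B=\underline{n+m}\setminus\underline n$ and $\epsilon_B=\sum_{\pi\in S_B}\pi$; directly from its definition $M_{n,m}=\C[S_{n+m}]\,\epsilon_B$, and $\tfrac1{m!}\epsilon_B$ is exactly the centrally primitive idempotent $E_\nu$ of $\C[S_B]\cong\C[S_m]$ attached to $\nu=(m)$. The subalgebras $\C[S_n]$ and $\C[S_m]$ of $\C[S_{n+m}]$ commute (disjoint supports), and after rescaling the Young symmetrizer $e_{t^1_\mu}$ to an honest idempotent---which changes neither $\C\inj\, e'_{t^1_\mu}$ nor $\C[S_n]\, e_{t^1_\mu}$---the element $f:=e_{t^1_\mu}E_\nu=e_{t^1_\mu}\otimes E_\nu$ is an idempotent of $A:=\C[S_n]\otimes\C[S_m]$, and $M_{n,m}\,e_{t^1_\mu}=\C[S_{n+m}]\,f$ as left $\C[S_{n+m}]$-modules.

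The structural heart of the argument is then the elementary fact that, for a unital subalgebra $A\subseteq R$ and an idempotent $f\in A$, one has $Rf\cong R\otimes_A Af$ as left $R$-modules: this follows by matching the decomposition $R=Rf\oplus R(1-f)$ with $R\otimes_A A=(R\otimes_A Af)\oplus(R\otimes_A A(1-f))$ under the canonical identification $R\otimes_A A\cong R$. Applying this with $R=\C[S_{n+m}]$ and the $f$ above, and computing $Af=(\C[S_n]\, e_{t^1_\mu})\otimes(\C[S_m]\, E_\nu)\cong S^\mu\otimes S^\nu$---using $\C[S_n]\, e_{t^1_\mu}\cong S^\mu$ from Subsection~\ref{s22} and $\C[S_m]\, E_{(m)}\cong S^{(m)}=S^\nu$, which holds since $S^{(m)}$ is one-dimensional---gives precisely $\C[S_{n+m}]P_\mu\cong\C[S_{n+m}]\otimes_{\C[S_n]\otimes\C[S_m]}(S^\mu\otimes S^\nu)$.

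I expect the main obstacles to be purely bookkeeping: verifying that the left $\C[S_{n+m}]$-structures really correspond under $\iota_{n,m}$ (left composition versus left multiplication, right composition with $e'_{t^1_\mu}$ versus right multiplication by $e_{t^1_\mu}$), and keeping straight the harmless rescaling of $e_{t^1_\mu}$ to an idempotent. The one place where the hypothesis $\nu=(m)$ is essential is the identification $\C[S_m]\, E_{(m)}\cong S^{(m)}$ on the nose (and not a proper multiple of $S^{(m)}$), which is what makes the right tensor factor come out as $S^\nu$ exactly.
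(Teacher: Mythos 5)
Your argument is correct, and its first half coincides with the paper's: both proofs begin by identifying $\C[S_{n+m}]P_\mu$ with the degree-$m$ component $\C\inj(\underline{n},\underline{n+m})e'_{t^1_\mu}$ and transporting it into $\C[S_{n+m}]$ via $\iota_{n,m}$. Where you diverge is in the final identification with the induced module. The paper first forms $M_{n,m}\otimes_{\C[S_n]}S^\mu$ and then simply writes down the map $m\otimes_{\C[S_n]}s_\mu\mapsto m\otimes_{\C[S_n]\otimes\C[S_m]}(s_\mu\otimes s_\nu)$, asserting it is an isomorphism; you instead stay inside $\C[S_{n+m}]$, observe that $M_{n,m}e_{t^1_\mu}=\C[S_{n+m}]f$ for the idempotent $f=e_{t^1_\mu}E_{(m)}$ of $A=\C[S_n]\otimes\C[S_m]$, and invoke the general fact $Rf\cong R\otimes_A Af$ together with $Af\cong S^\mu\otimes S^\nu$. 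Your route makes the last step fully rigorous by reducing it to a standard lemma about idempotents in subalgebras (your verification via $R=Rf\oplus R(1-f)$ is sound), at the cost of a harmless rescaling of the Young symmetrizer; the paper's route is shorter but leaves the bijectivity of its explicit map unargued. You also correctly isolate where $\nu=(m)$ enters, namely that $\C[S_m]E_{(m)}$ is a single copy of the trivial module because $S^{(m)}$ is one-dimensional. I see no gap.
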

\begin{proof}
We compute
\begin{equation*}
\begin{aligned}
\C[S_{n+m}]P_\mu&=\C[S_{n+m}]{^{\C\inj}(\C\inj e'_{t^1_\mu})}\\
&\cong \C\inj(\underline{n},\underline{n+m})e'_{t^1_\mu}\\
&\cong \C\inj(\underline{n},\underline{n+m})\otimes_{\C[S_n]}\C\inj(\underline{n},\underline{n})e'_{t^1_\mu}\\
&\cong \C\inj(\underline{n},\underline{n+m})\otimes_{\C[S_n]} S^\mu\\
&\cong M_{n,m}\otimes_{\C[S_n]} S^\mu\xrightarrow{\sim} \C[S_{n+m}]\otimes_{\C[S_n]\otimes \C[S_m]}(S^\mu\otimes S^\nu)\\
&{\hspace{36 pt}}x\otimes_{\C[S_n]} s_\mu\mapsto x\otimes_{\C[S_n]\otimes \C[S_m]}(s_\mu\otimes s_\nu),
\end{aligned}
\end{equation*}
where $S^\nu$ is the trivial module and $s_\nu\in S^\nu$ is some fixed basis element. 
\end{proof}

\subsection{Proof of Theorem \ref{quiverthm}}

\begin{proof}[Proof of Theorem \ref{quiverthm}]
We want need to find the nodes, arrows and relations of the quiver of $\C\inj$. The first two are a result of Brimacombe (cf. \cite[Theorem 8.1.2]{Br11}), whose proof we outline for the convenience of the reader. 

The nodes are indexed by the primitive idempotents $e_{t^1_\mu}'$, which are in turn indexed by the Young diagrams $\mu$. 

The arrows of the Gabriel quiver of $\C\inj$ correspond to a basis of $\rad(\C\inj)/\rad^2(\C\inj)$. It is obvious from our definition of the radical (cf. Subsection \ref{ss23}) that $\rad(\C\inj)=(\C\inj)_{\ge 1}$, and therefore $\rad(\C\inj)/\rad^2(\C\inj)=(\C\inj)_1$. Thus every arrow of the quiver corresponds to a basis element of some $\C\inj(\underline{n},\underline{n+1})$. 

Let for the remainder of this proof $\mu\vdash n$ be fixed. 

We have an obvious $\C [S_{n+1}]$-$\C [S_n]$-bimodule isomorphism 
\begin{equation*}
\C\inj(\underline{n},\underline{n+1})\cong \C[S_{n+1}].
\end{equation*}
There is also the $\C[S_{n+1}]$-module isomorphism 
\begin{align*}
\C [S_{n+1}]e_{t^1_\mu}&\cong \C [S_{n+1}]\otimes_{\C [S_n]} \C [S_{n}]e_{t^1_\mu}\\
ae_{t^1_\mu}&\leftrightarrow a\otimes e_{t^1_\mu}.
\end{align*}
Finally, for $\lambda\vdash n+1$, we have the $e_{t^1_\lambda}\C [S_{n+1}] e_{t^1_\lambda}$-module isomorphism
\begin{align*}
\Hom_{\C [S_{n+1}]}(\C [S_{n+1}]e_{t^1_\lambda}, \C [S_{n+1}]\otimes_{\C [S_{n}]}\C [S_n] e_{t^1_\mu})&\xrightarrow{\sim} e_{t^1_\lambda}(\C [S_{n+1}]\otimes_{\C [S_n]} \C [S_{n}]e_{t^1_\mu})\\
\phi&\mapsto \phi(e_{t^1_\lambda}).
\end{align*}
Altogether we see that the nodes of the Gabriel quiver of $\C\inj$ may be identified with partitions of non-negative integers, with the number of arrows from $\mu$ to $\lambda$ given by 
\begin{align*}
\dim(e'_{t^1_\lambda}\C\inj(\underline{n},\underline{n+1})e'_{t^1_\mu})&=\dim(e_{t^1_\lambda}\C[S_{n+1}]e_{t^1_\mu})\\&=\dim(e_{t^1_\lambda}(\C [S_{n+1}]\otimes_{\C [S_n]} \C [S_{n}]e_{t^1_\mu}))\\&=\dim(\Hom_{\C [S_{n+1}]}(\C [S_{n+1}]e_{t^1_\lambda}, \C [S_{n+1}]\otimes_{\C [S_n]} \C [S_{n}]e_{t^1_\mu})\\&=\dim(\Hom_{\C [S_{n+1}]}(S^\lambda, \C [S_{n+1}]\otimes_{\C [S_n]} S^\mu).
\end{align*}
This number is by the Branching rule $1$ if $\lambda$ is obtainable from $\mu$ by adding a node, and $0$ otherwise (cf. \cite[Theorem 2.8.3 p. 77]{Sa01}). 

We now turn our attention to the relations of the quiver. Let $\lambda\vdash n+m$ be a Young diagram obtained from $\mu\vdash n$ by adding nodes. We begin by computing the number of $L_\lambda$-subquotients in $P_\mu$. Left multiplication with $E'_\lambda$ is the same as projection onto the $L_\lambda$-subspaces, so the maximal subquotient of $P_\mu$ isomorphic to a sum of copies of $L_\lambda$ is given by ${E'}_\lambda P_\mu$. We claim that we have
\begin{equation}
\label{projlvls}
E'_\lambda P_\mu \cong \begin{cases}
        0, & \mbox{if } \mu\xrightarrow{2}\lambda\\
        L_\lambda, & \mbox{ otherwise.}
        \end{cases} 
\end{equation}
Indeed, the number of simple subquotients $L_\lambda$ in the $\C\inj$-module $P_\mu$ equals the number of simple subquotients $S^\lambda$ in the $\C[S_{n+m}]$-module $P_\mu$. The projection of $P_\mu$ onto its non-zero $\C[S_{n+m}]$-summands is given by $\C[S_{n+m}]P_\mu$. Let $\nu=(m)$. By Lemma \ref{projeqlem}, we have the isomorphism of $\C[S_{n+m}]$-modules
\begin{equation*}
\C[S_{n+m}]P_\mu\cong \C[S_{n+m}]\otimes_{\C[S_n]\otimes \C[S_m]}(S^\mu\otimes S^\nu)\cong \bigoplus_{\lambda'\vdash n+m} c^{\lambda'}_{\mu,\nu}S^{\lambda'},
\end{equation*}
where the \emph{Littlewood-Richardson coefficients} $c^{\lambda'}_{\mu,\nu}$ may be calculated by the Littlewood-Richardson rule (cf. Theorem 4.9.4 of \cite{Sa01}). The rule tells us that $c^{\lambda'}_{\mu,\nu}$ equals the number of semistandard tableaux $t$ which have shape $\lambda'\backslash\mu$, content $\nu$ and whose reverse row word is a lattice permutation. But since the content diagram $\nu$ is a single row, every entry of $t$ is a 1. Therefore only the semistandardness condition may be unsatisfied, and this will happen precisely when $\lambda\backslash \mu$ has at least two nodes in the same column (in which case $c^{\lambda'}_{\mu,\nu}=0$), and when the condition is satisfied, $t$ is uniquely determined (in which case $c^{\lambda'}_{\mu,\nu}=1$). The claim follows. 

Let next $\nu\vdash n+m$ be obtained from $\mu\vdash n$, and $\lambda\vdash n+m+l$ be obtained from $\nu$ by adding nodes, no two of which to the same column. We will show that the submodule of $P_\mu$ generated by $L_\nu$ contains $L_\lambda$ as a subquotient. This will prove that in the quiver of $\C\inj$, the path from $\mu$, through $\nu$, to $\lambda$ is non-zero. It suffices to show that
\begin{equation*}
{^{\C\inj}(\C\inj E'_\nu\C\inj(\underline{n},\underline{n+m})e'_{t^1_\mu})}
\end{equation*}
contains the subquotient $L_\lambda$. 

Clearly we have the $\C[S_{n+m+l}]$-module isomorphism
\begin{align*}
\C\inj(\underline{n+m},\underline{n+m+l})\otimes_{\C[S_{n+m}]}&\C\inj(\underline{n},\underline{n+m})\\&\xrightarrow{\sim}\C\inj(\underline{n+m},\underline{n+m+l})\C\inj(\underline{n},\underline{n+m})\\
a\otimes_{\C[S_{n+m}]}b&\mapsto ab.
\end{align*}
This restricts to the isomorphism of submodules
\begin{align*}
&E'_\lambda\C\inj(\underline{n+m},\underline{n+m+l})E'_\nu\C\inj(\underline{n},\underline{n+m})\\&\cong E'_\lambda\C\inj(\underline{n+m},\underline{n+m+l})\otimes_{\C[S_{n+m}]}E'_\nu\C\inj(\underline{n},\underline{n+m}).
\end{align*}
Using this and the isomorphism \eqref{projlvls}, we get the $\C[S_{n+m+l}]$-module isomorphisms
\begin{equation}
\label{longcalc}
\begin{aligned}
&E'_\lambda {^{\C\inj}(\C\inj E'_\nu\C\inj(\underline{n},\underline{n+m})e'_{t^1_\mu})}\\
&\cong E'_\lambda\C\inj(\underline{n+m},\underline{n+m+l})E'_\nu\C\inj(\underline{n},\underline{n+m})e'_{t^1_\mu}\\
&\cong E'_\lambda\C\inj(\underline{n+m},\underline{n+m+l})\otimes_{\C[S_{n+m}]}E'_\nu\C\inj(\underline{n},\underline{n+m})e'_{t^1_\mu}\\
&\cong E'_\lambda\C\inj(\underline{n+m},\underline{n+m+l})\otimes_{\C[S_{n+m}]}E'_\nu P_\mu\\
&\cong E'_\lambda\C\inj(\underline{n+m},\underline{n+m+l})\otimes_{\C[S_{n+m}]}L_\nu\\
&\cong E'_\lambda P_\nu\\
&\cong L_\lambda\ne 0.
\end{aligned}
\end{equation}

It remains to check that commutativity of different paths between the same vertices holds in our quiver. It suffices to check this for paths of length two, i. e. check that we may fix non-zero morphisms 
\begin{equation*}
\phi_{\lambda,\mu}:P_\lambda\rightarrow P_\mu
\end{equation*}
for all diagrams $\lambda$ and $\mu$, with $\lambda$ obtained from $\mu$ by adding a node, so that for every diamond $(\lambda_1,\lambda_2,\lambda_3,\lambda_4)$ we have
\begin{equation}
\label{comm}
\phi_{\lambda_2,\lambda_1}\circ\phi_{\lambda_4,\lambda_2}=\phi_{\lambda_3,\lambda_1}\circ\phi_{\lambda_4,\lambda_3}.
\end{equation}

First note that no matter how we pick the morphisms $\phi_{\lambda,\mu}$, we will have
\begin{equation*}
\phi_{\lambda_2,\lambda_1}\circ\phi_{\lambda_4,\lambda_2}=c\phi_{\lambda_3,\lambda_1}\circ\phi_{\lambda_4,\lambda_3},
\end{equation*}
for some non-zero $c\in\C$ (depending on $\lambda$ and $\mu$). This follows immediately from Schur's lemma together with the calculation \eqref{longcalc}, the latter showing that neither the LHS nor the RHS of \eqref{comm} is zero. 

Now define the $\phi_{\lambda,\mu}$ by extending to morphisms the maps
\begin{align*}
\phi_{\lambda,\mu}:P_\lambda&\rightarrow P_\mu\\
v_\lambda&\mapsto v_{\lambda,\mu},
\end{align*}
where the $v_\lambda$ and $v_{\lambda,\mu}$ are defined according to the following algorithm.

Let $k\in\Z_{>1}$.
\begin{description}
\item[Step 1]
For every $n\in\Z_{>0}$ and every diagram $\lambda\vdash n$, fix any non-zero vector $v_\lambda$ in $P_\lambda(\underline{n})$. Let $\mu_2=(1)$ be the diagram that consists of one node. 

\item[Step k]
For every $\lambda$ with $f_{\lambda,\mu_{k}}$ already defined during a previous step, set
\begin{equation*}
v_{\lambda,\mu_{k}}=f_{\lambda,\mu_{k}}\cdot v_{\mu_{k}}.
\end{equation*}

For every other $\lambda$ that satisfies $\mu_k\rightarrow \lambda$ but $\mu_k\not\xrightarrow{2}\lambda$ take
\begin{equation*}
v_{\lambda,\mu_{k}}\in S^\lambda\subset P_{\mu_{k}}
\end{equation*}
to be an arbitrary non-zero vector, and for each $\nu$ that satisfies $\lambda\rightarrow\nu$ but $\mu_k\not\xrightarrow{2}\nu$ define
\begin{equation*}
f_{\nu,\lambda}\in E_\nu'\C\inj E_{\lambda}'
\end{equation*}
to be the unique non-zero vector that satisfies
\begin{equation*}
v_{{\nu,\mu_k}}=f_{\nu,\lambda}\cdot v_{\lambda,\mu_k}.
\end{equation*}
We want to make sure that this does not overwrite any previous definition of $f_{\nu,\lambda}$, i. e. that under the assumption that $f_{\nu,\lambda}$ was defined already during step $m<k$, then $f_{\nu,\mu_k}$ and $f_{\lambda,\mu_k}$ were also defined during an earlier step. Let $\mu_l$ be the join of $\mu_k$ and $\mu_m$ in the Young lattice. It is easily seen that $l<k$, and $\mu_l\rightarrow \nu$ but $\mu_l\not\xrightarrow{2}\nu$. Hence $f_{\nu,\mu_k}$ and $f_{\lambda,\mu_k}$ were defined already during step $l$.

Now let $\mu_{k+1}$ be the smallest diagram greater than $\mu_k$ according to the dominance order (cf. \cite[p. 68]{Sa01}, though any linear order $\trianglelefteq$ satisfying that $|\lambda|<|\mu|$ implies $\lambda\trianglelefteq\mu$ will do). 
\end{description}

We claim that for every diamond $(\lambda_1,\lambda_2,\lambda_3,\lambda_4)$ we have 
\begin{equation*}
f_{\lambda_4,\lambda_2}f_{\lambda_2,\lambda_1}=f_{\lambda_4,\lambda_3}f_{\lambda_3,\lambda_1}.
\end{equation*}
Indeed, let $f_{\lambda_3,\lambda_4}$ have been defined during step $m$. Let $\mu_l$ be the join of $\mu_m$ and $\lambda_2$. Clearly $l\le m$ with $\mu_l\rightarrow\lambda_1$, but $\mu_l\not\xrightarrow{2}\lambda_4$. Therefore
\begin{equation*}
f_{\lambda_4,\lambda_2}f_{\lambda_2,\lambda_1}\cdot v_{\lambda_1,\mu_l}=f_{\lambda_4,\lambda_2}\cdot v_{\lambda_2,\mu_l}=v_{\lambda_4,\mu_l}=f_{\lambda_4,\lambda_3}\cdot v_{\lambda_3,\mu_l}=f_{\lambda_4,\lambda_3}f_{\lambda_3,\lambda_1}\cdot v_{\lambda_1,\mu_l},
\end{equation*}
which implies that
\begin{equation*}
f_{\lambda_4,\lambda_2}f_{\lambda_2,\lambda_1}=f_{\lambda_4,\lambda_1}=f_{\lambda_4,\lambda_3}f_{\lambda_3,\lambda_1},
\end{equation*}
as claimed.

Thus we have that
\begin{align*}
\phi_{\lambda_2,\lambda_1}\circ\phi_{\lambda_4,\lambda_2}:P_{\lambda_4}&\rightarrow P_{\lambda_1} &P_{\lambda_1}&\leftarrow P_{\lambda_4}:\phi_{\lambda_3,\lambda_1}\circ\phi_{\lambda_4,\lambda_3}\\
v_{\lambda_4}&\mapsto f_{\lambda_4,\lambda_2}f_{\lambda_2,\lambda_1}v_{\lambda_1}&=f_{\lambda_4,\lambda_3}f_{\lambda_3,\lambda_1}v_{\lambda_1}&\mapsfrom v_{\lambda_4}.
\end{align*}

\end{proof}

\section{On the Koszulity of the category of injections between finite sets}\label{s3}
All modules in this section will be considered as $\Z$-graded ones, with the grading obtained as described in sections \ref{s1} and \ref{ss23}. 

\subsection{Some additional notation}

Let us in this section denote by 
\begin{equation*}
P_\lambda=\C\cat(\lambda,\_) 
\end{equation*}
the indecomposable projective object in $\cgmod$ that is generated in degree 0 and starting at diagram $\lambda$, and let $L_\lambda$ be the (simple, one-dimensional) top of $P_\lambda$. The same notation was already used for $\C\inj$-modules in the previous section, but observe that if we forget about the grading, these different modules correspond to each other under the Morita equivalence given in Theorem \ref{quiverthm}.

As before, we let the (possibly decorated) letters $\lambda$, $\mu$ and $\nu$ denote Young diagrams, but when no confusion should occur we will by abuse of notation also let $\lambda$ denote a subquotient, isomorphic to $L_\lambda\langle i\rangle$, of some module and for some $i$. 

\subsection{Koszul categories and quadratic duals}
\label{koszss}
Following \cite{MOS09}, we define Koszul categories and their quadratic duals in analogy with the definitions for finite-dimensional, unital algebras.

A $\Z_{\ge 0}$-graded category $\mathcal{E}$ is said to be \emph{Koszul} if every simple object $L$ in $\mathcal{E}\text{-gMod}$ satisfying that $L_0=L$ has a \emph{linear resolution}, i.e. a graded projective resolution $\mathcal{P}^\bullet$ such that each of its modules $\mathcal{P}^{-n}$ is generated by $(\mathcal{P}^{-n})_n$. 

We note that the Young lattice is locally Cohen-Macaulay by property 5 in \cite{BS05}, so from the main result of \cite{Po95} the following proposition immediately follows. 

\begin{myprop}
$\C\cat'$ is Koszul. 
\end{myprop}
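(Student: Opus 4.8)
The plan is to invoke, essentially verbatim, Polo's characterization of Koszulity of incidence algebras in terms of the Cohen--Macaulay property, after first checking that the computation is genuinely local. Recall from Subsection~\ref{koszss} that, following \cite{MOS09}, proving $\C\cat'$ Koszul amounts to producing, for every Young diagram $\lambda$, a linear projective resolution of the simple $\C\cat'$-module $L_\lambda$ concentrated in degree $0$. Since $\cat'$ is the incidence category of the Young lattice, and the Young lattice is ranked (by $|\lambda|$) and locally finite, the radical filtration of $P_\lambda=\C\cat'(\lambda,\_)$ shows that the minimal projective resolution of $L_\lambda$ involves only projectives $P_\mu$ with $\mu$ comparable to $\lambda$, and its homological degree-$n$ term involves only diagrams at a fixed rank-distance from $\lambda$. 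Hence, for any fixed $\lambda$ and any fixed homological degree, the relevant data lives inside the incidence algebra of a single \emph{finite} interval of the Young lattice. It therefore suffices to show that the incidence algebra of each such finite interval is Koszul, the linear-resolution property for $L_\lambda$ over $\C\cat'$ then being assembled degree by degree from these finite pieces.

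For a finite graded poset $P$, its incidence algebra over $\C$ is a finite-dimensional, positively graded algebra (and, as already noted in the excerpt, $\C\cat'$ is positively graded in the sense of \cite[Definition~1]{MOS09}, so the framework of \cite{MOS09} applies throughout). The main result of \cite{Po95} asserts that this incidence algebra is Koszul if and only if $P$ is Cohen--Macaulay over $\C$, i.e.\ every open subinterval $(x,y)$ of $P$ has the $\C$-homology of a wedge of top-dimensional spheres. Now, by property~5 of \cite{BS05}, the Young lattice is locally Cohen--Macaulay: every interval $[\lambda,\mu]$ in it is Cohen--Macaulay, and the same then holds for each of its open subintervals. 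Applying \cite{Po95} to each such finite interval $[\lambda,\mu]$ yields that its incidence algebra is Koszul.

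Combining the two steps: each term of the minimal projective resolution of a simple $\C\cat'$-module, and each extension group $\Ext^i_{\C\cat'}(L_\lambda,L_\mu)$, is computed inside the incidence algebra of a finite Cohen--Macaulay interval of the Young lattice, which is Koszul by \cite{Po95}; hence the linear-resolution condition holds for every $L_\lambda$, and $\C\cat'$ is Koszul. The only point requiring (routine) care --- and the one I would flag as the main, though mild, obstacle --- is this passage from the finite posets covered by \cite{Po95} to the infinite, locally finite, ranked poset $\cat'$. This is harmless precisely because Koszulity in the sense of \cite{MOS09} is tested one simple module and one homological degree at a time, and each such test only sees finitely many diagrams; no new behaviour appears in the limit, so the proposition follows immediately.
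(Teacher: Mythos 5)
Your argument is essentially identical to the paper's, which likewise deduces the proposition from the local Cohen--Macaulayness of the Young lattice (property 5 of \cite{BS05}) combined with the main result of \cite{Po95}; the paper simply states this in one sentence, while you additionally spell out the (routine but worth noting) reduction from the infinite, locally finite poset to its finite intervals. The proposal is correct and takes the same route.
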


A positively graded (in the sense of \cite[Definition 1]{MOS09}) $\C$-linear category $\C\mathcal{E}$ has a \emph{quadratic dual} defined as follows. Let $\C\mathcal{E}_i$ be the dense subcategory of $\C\mathcal{E}$ containing precisely the morphisms that are homogeneous in degree $i$. Then $\C\mathcal{E}_i(\_,\_)$ is a $\C\mathcal{E}_0$-bimodule in the natural way, and so is therefore $(\C\mathcal{E}_i(\_,\_))^*$. Let $\C\mathcal{F}$ be the category with the same objects as $\C\mathcal{E}$ and morphism spaces
\begin{equation*}
\C\mathcal{F}(\_,\_)=\C\mathcal{E}_0(\_,\_)\oplus (\C\mathcal{E}_1(\_,\_))^*\oplus (\C\mathcal{E}_1(\_,\_))^*\otimes_{\C\mathcal{E}_0} (\C\mathcal{E}_1(\_,\_))^*\oplus\dots
\end{equation*}
For $X,Y\in\Ob(\C\mathcal{E})$, the multiplication map
\begin{equation*}
m_{X,Y}:\bigoplus_{Z\in\Ob (\C\mathcal{E})}\C\mathcal{E}_1(Z,Y)\otimes_{\C\mathcal{E}_0}\C\mathcal{E}_1(X,Z)\rightarrow \C\mathcal{E}_2(X,Y)
\end{equation*}
gives rise to the dual map
\begin{align*}
m_{X,Y}^*:&(\C\mathcal{E}_2(X,Y))^*\rightarrow (\bigoplus_{Z\in\Ob (\C\mathcal{E})}\C\mathcal{E}_1(Z,Y)\otimes_{\C\mathcal{E}_0}\C\mathcal{E}_1(X,Z))^*\\
&\cong \bigoplus_{Z\in\Ob (\C\mathcal{E})}(\C\mathcal{E}_1(X,Z))^*\otimes_{\C\mathcal{E}_0}(\C\mathcal{E}_1(Z,Y))^*.
\end{align*}
Let $\mathcal{J}$ be the category ideal of $\C\mathcal{F}$ generated by the $\im(m_{X,Y}^*)$, and finally define the \emph{quadratic dual} of $\C\mathcal{E}$ to be the category
\begin{equation*}
\C\mathcal{E}^!= \C\mathcal{F}/\mathcal{J}.
\end{equation*}

\subsection{Signs of quiver arrows}
\begin{mylem}
\label{sgnlem}
Each arrow, say from $\lambda$ to $\mu$ of the Young quiver $Q$ may be assigned a ``sign'' $s^{\lambda}_{\mu}=\pm 1$ such that for any diamond $(\lambda_1,\lambda_2,\lambda_3,\lambda_4)$ we have $s^{\lambda_2}_{\lambda_4}s^{\lambda_1}_{\lambda_2}=-s^{\lambda_3}_{\lambda_4}s^{\lambda_1}_{\lambda_3}$.
\end{mylem}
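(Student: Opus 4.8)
We want to assign signs $s^\lambda_\mu = \pm 1$ to arrows $\lambda \to \mu$ of the Young quiver so that each diamond anticommutes.

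The plan is to set up a concrete combinatorial formula for the sign and then verify the diamond relation by a direct case check. Recall that an arrow $\lambda \to \mu$ corresponds to adding a box to $\lambda$ in some column, say column $c$, landing in some row $r$. I would define $s^\lambda_\mu = (-1)^{a}$ where $a$ is the number of boxes of $\lambda$ strictly below the added box and in columns strictly to the left of $c$ — equivalently, roughly "the number of inversions" the new box creates when one reads the boundary of the diagram. (This is essentially the sign appearing in the differential of the complex computing the homology of the Young lattice, or the sign that makes the "down-up minus up-down" operator relations of the Heisenberg/Okounkov–Vershik type hold.) The precise choice is flexible; what matters is that it can be computed locally from the positions of the two added boxes in a diamond.

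The key steps, in order, would be: (1) Fix the formula $s^\lambda_\mu$ as above and check it is well-defined (it is, since adding an addable box to a Young diagram gives a Young diagram). (2) Classify diamonds $(\lambda_1,\lambda_2,\lambda_3,\lambda_4)$: going from $\lambda_1$ to $\lambda_4$ we add two boxes in two distinct columns $c < c'$, landing in rows $r$ (column $c$) and $r'$ (column $c'$); the two intermediate diagrams $\lambda_2,\lambda_3$ correspond to the two orders of addition, and we must have $r \ne r'$ or the boxes are in adjacent positions — in all cases the two added boxes occupy non-adjacent or "independent" cells so that both addition orders are legal. (3) For each configuration, compute the four signs $s^{\lambda_1}_{\lambda_2}, s^{\lambda_2}_{\lambda_4}, s^{\lambda_1}_{\lambda_3}, s^{\lambda_3}_{\lambda_4}$ from the formula and check that the product $s^{\lambda_2}_{\lambda_4}s^{\lambda_1}_{\lambda_2}$ and $s^{\lambda_3}_{\lambda_4}s^{\lambda_1}_{\lambda_3}$ differ exactly by the factor $-1$. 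The mechanism is that when we add the box in column $c$ first versus the box in column $c'$ first, the count $a$ for the second box added differs depending on whether the first box (which lies in a column between them or not) was already present; tracking this parity difference across the two orders produces the sign $-1$ precisely because the two added boxes lie in two different columns with one "to the left of" the other.

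**Main obstacle.** The real work — and the place to be careful — is step (3): making sure the combinatorial sign formula genuinely yields the $-1$ in every diamond type, including the degenerate-looking ones (e.g. when $c' = c+1$ and the two boxes are nearly adjacent, or when one added box sits directly above/beside the other). An alternative, cleaner route that sidesteps explicit box-counting is to invoke a general principle: the incidence algebra of a locally finite modular (or distributive) lattice whose Hasse diagram consists only of diamonds admits such a "sign cocycle" because the relevant "diamond cohomology" vanishes — one can build the assignment by induction over the rank, choosing signs one arrow at a time and checking that the constraints imposed by the (finitely many) diamonds through a given arrow are always consistent, exactly as in the $f_{\nu,\lambda}$-construction in the proof of Theorem \ref{quiverthm}. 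I would likely present the inductive existence argument as the main proof (mirroring the algorithm already used for the $f$'s, with join $\mu_l$ arguments guaranteeing no conflicting prior assignment), since it is shorter and robust, and only mention the explicit column-inversion formula as a remark. The hard part in that approach is verifying the consistency of the inductive choices, but this is essentially identical to — and no harder than — the compatibility check already carried out for $f_{\nu,\lambda}$ in Section \ref{s2}.
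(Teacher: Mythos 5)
Your explicit sign formula is correct and is essentially the paper's proof written in closed form. The paper assigns signs to the rows of each diagram recursively (flipping the signs of all rows below an added node), which amounts to $s^\lambda_\mu=(-1)^{\lambda_1+\dots+\lambda_{r-1}}$ when the node is added in row $r$; your count of the boxes of $\lambda$ strictly below and strictly to the left of the added box equals $\sum_{i>r}\lambda_i$, and either parity works for the same reason: in a diamond the two added boxes lie in distinct rows \emph{and} distinct columns (any two distinct addable cells do, so there are no degenerate configurations to worry about in your step (2)), and exactly one of the two addition orders changes the count for the second box by one. Your step (3) identifies precisely this mechanism, so that route goes through.

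I would, however, warn you off the alternative you say you would ``likely present as the main proof.'' The greedy inductive construction is \emph{not} ``essentially identical to and no harder than'' the compatibility check for the $f_{\nu,\lambda}$ in the proof of Theorem \ref{quiverthm}. There, consistency comes for free from module theory: the spaces $E'_\lambda P_\mu\cong L_\lambda$ are one-dimensional, so any two paths between the same idempotents automatically agree up to a nonzero scalar, and evaluating on the fixed vectors $v_{\lambda,\mu_k}$ pins that scalar to $1$; the existence of the ambient modules does the cohomological work. For a bare assignment of signs with the prescribed value $-1$ on every diamond there is no such ambient object, and choosing signs one arrow at a time can a priori run into conflicting constraints from two diamonds sharing an arrow; one must actually verify that the product of the prescribed $-1$'s around every closed cycle of diamonds in the Hasse diagram is $+1$. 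That verification is exactly what the explicit parity formula supplies, so the formula (or an equivalent local parity computation, as in the paper) is not an optional remark to be relegated to a footnote --- it is the content of the lemma, and the inductive argument without it has a genuine gap.
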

\begin{figure}
\includegraphics{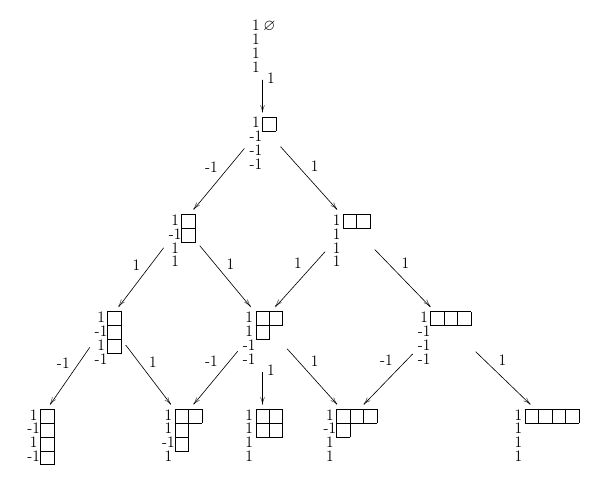}
\caption{The signs of quiver arrows (and diagram rows) assigned by the algorithm of Lemma \ref{sgnlem} up to diagrams size four.}
\end{figure}
\begin{proof}
Consider each diagram as embedded in a doubly infinite sequence of rows. Assign to the row of the empty diagram the ``sign'' $1$. Assign signs $1$ or $-1$ to the rows of the rest of the diagrams as follows. Going along an arrow of $Q$ corresponds to adding an addable node to a diagram $\lambda$, thus obtaining another diagram $\mu$. When the rows of $\lambda$ have been given signs, let the rows of the new $\mu$ get the same signs, but with signs switched of the rows below the one where the node was added. Also let $s^{\lambda}_{\mu}$ be the sign of that row. 

It is easily seen that this procedure assigns a well-defined sign to every arrow. For any diamond $(\lambda_1,\lambda_2,\lambda_3,\lambda_4)$, precisely one of the pairs $s^{\lambda_2}_{\lambda_4}$ and $s^{\lambda_1}_{\lambda_2}$, respective $s^{\lambda_3}_{\lambda_4}$ and $s^{\lambda_1}_{\lambda_3}$ will be of the same sign (namely the pair corresponding to adding the uppermost node last). The lemma follows. 
\end{proof}

The above lemma will be used in the proof of Theorem \ref{koszthm}, but also gives us the following result.

\begin{mythm}
\label{selfdualthm}
There is an isomorphism $\C\cat^!\cong(\C\cat)^{\text{op}}$. 
\end{mythm}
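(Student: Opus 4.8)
The plan is to exhibit the isomorphism $\C\cat^! \cong (\C\cat)^{\mathrm{op}}$ by comparing generators and relations. Both categories have as objects the Young diagrams, so it suffices to match morphism spaces and relations. Recall from Theorem \ref{quiverthm} that $\C\cat$ is the path category of the quiver $Q$ (the Hasse diagram of the Young lattice) modulo the quadratic ideal generated, at each vertex, by (i) the ``two boxes in the same column'' paths of length two, and (ii) for each diamond $(\lambda_1,\lambda_2,\lambda_3,\lambda_4)$, the commutativity relation $[\mu\lambda_2\nu$-path$]-[\mu\lambda_3\nu$-path$]$. Since $\C\cat$ is a quadratic category, so is $(\C\cat)^{\mathrm{op}}$, which is the path category of $Q^{\mathrm{op}}$ (arrows reversed) modulo the opposite relations; and $\C\cat^!$ is the path category of $Q^{\mathrm{op}}$ modulo the orthogonal complement of the degree-$2$ relations, computed inside $\bigoplus_Z (\C\cat_1(X,Z))^* \otimes_{\C\cat_0}(\C\cat_1(Z,Y))^*$. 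So the content of the theorem is entirely a statement about the orthogonal complement of the quadratic relation space.

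First I would fix, for each arrow $\lambda \to \mu$ of $Q$, the dual basis vector and rescale it using the signs $s^{\lambda}_{\mu} = \pm 1$ produced by Lemma \ref{sgnlem}; call the rescaled generator of $\C\cat^!$ (living on the reversed arrow $\mu \to \lambda$) the element $a^{\mu}_{\lambda}$, and let $b^{\lambda}_{\mu}$ denote the generator of $(\C\cat)^{\mathrm{op}}$ on the same reversed arrow. I would then define the candidate functor $\Phi: \C\cat^! \to (\C\cat)^{\mathrm{op}}$ to be the identity on objects and to send $a^{\mu}_{\lambda} \mapsto b^{\lambda}_{\mu}$, extended multiplicatively. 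The task is to check that $\Phi$ is well-defined (relations map to relations) and bijective on morphism spaces; bijectivity then follows once we know $\Phi$ sends a spanning set of relations to a spanning set of relations, because the quadratic-dual construction of $\MOS$-type guarantees $\dim \C\cat^!(X,Y)_2 = \binom{\text{total}}{} - \dim(\text{relations})$ matches $\dim (\C\cat)^{\mathrm{op}}(X,Y)_2$ whenever the relation spaces are orthogonal complements of the same dimension.

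The heart of the argument is the local degree-$2$ computation at a fixed pair of vertices $\mu$ (source in $\C\cat$) and $\nu$ with $\mu \xrightarrow{} \nu$ by adding two boxes. Here $\bigoplus_Z \C\cat_1(Z,\nu)\otimes_{\C\cat_0}\C\cat_1(\mu,Z)$ is at most $2$-dimensional, spanned by the two routes $\mu\to\lambda_2\to\nu$ and $\mu\to\lambda_3\to\nu$ through the intermediate diagrams $\lambda_2 \ne \lambda_3$ (if only one intermediate exists, i.e. the two added boxes lie in the same column, the space is $1$-dimensional and its entire dual is a relation on both sides — an easy case). In the generic case the relation in $\C\cat$ is the commutativity element $[\text{route }2] - [\text{route }3]$, a $1$-dimensional subspace, so its orthogonal complement is also $1$-dimensional; I must show this complement is spanned, in the rescaled dual basis, by $a^{\nu}_{\lambda_2}a^{\lambda_2}_{\mu} + a^{\nu}_{\lambda_3}a^{\lambda_3}_{\mu}$, i.e. that the dual relation picks up exactly the sign flip. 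This is where Lemma \ref{sgnlem} enters decisively: the sign condition $s^{\lambda_2}_{\nu}s^{\mu}_{\lambda_2} = -\, s^{\lambda_3}_{\nu}s^{\mu}_{\lambda_3}$ is precisely what converts the ``$-$'' in the commutativity relation of $\C\cat$ into a ``$+$'' after rescaling (and vice versa), so that $\Phi$ carries the $\C\cat^!$-relation to a nonzero scalar multiple of the $(\C\cat)^{\mathrm{op}}$-relation. The same sign bookkeeping handles the remaining local case, where the length-$2$ path space at $(\mu,\nu)$ is spanned by a single ``two boxes in one column'' route: there the full space is the relation space in $\C\cat$, so the dual relation space is zero, matching the fact that in $(\C\cat)^{\mathrm{op}}$ this single route survives.

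I expect the main obstacle to be purely organizational rather than deep: keeping straight the three layers of dualization and sign conventions — the op-category's reversal of arrows, the quadratic-dual's reversal of arrows plus its passage to the orthogonal complement, and the Lemma \ref{sgnlem} rescaling — and verifying that they compose to the identity-on-objects isomorphism rather than to some twist. Concretely, one must confirm that the self-duality is genuine (the quiver $Q^{\mathrm{op}}$ is anti-isomorphic to $Q$ by transposing diagrams? — no: here the match is subtler, since $(\C\cat)^{\mathrm{op}}$ lives on $Q^{\mathrm{op}}$ and so does $\C\cat^!$, so no diagram transpose is needed and the object map really is the identity). Once the $2$-dimensional and $1$-dimensional local cases are dispatched as above, well-definedness of $\Phi$ on all of $\C\cat^!$ is automatic because a category defined by quadratic relations is determined by its degree-$\le 2$ part, and bijectivity follows by the dimension count noted above. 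I would close by remarking that this also re-proves that $\C\cat$ is quadratic, which is implicit in Theorem \ref{quiverthm}.
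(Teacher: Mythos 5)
Your overall strategy (compare quadratic relations locally in degree two, and use the signs of Lemma \ref{sgnlem} to convert the commutativity relations of $\C\cat$ into the anticommutativity relations of $\C\cat^!$) is the right one for the diamond case, and that part matches the paper's proof. But there is a genuine error in your treatment of the degenerate one-dimensional cases, and it propagates into a wrong object map. When $\mu\xrightarrow{2}\lambda$ by adding two boxes to the same \emph{column}, the length-two path space in $\C\cat'$ is one-dimensional and is entirely killed in $\C\cat$, so $m_{\mu,\lambda}=0$; hence $m_{\mu,\lambda}^*=0$, \emph{no} relation is imposed in $\C\cat^!$, and $\dim\C\cat^!(\lambda,\mu)_2=1$ while $(\C\cat)^{\mathrm{op}}(\lambda,\mu)=\C\cat(\mu,\lambda)=0$. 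Dually, when the two boxes are added to the same \emph{row} (the case $\mu^T\xrightarrow{2}\lambda^T$, which you never treat separately), $m_{\mu,\lambda}$ is an isomorphism of one-dimensional spaces, so $m_{\mu,\lambda}^*$ is surjective and $\C\cat^!(\lambda,\mu)_2=0$, while the path survives in $(\C\cat)^{\mathrm{op}}$. Your claim that in the one-dimensional case ``its entire dual is a relation on both sides'' is therefore backwards on one side, and an identity-on-objects functor $\Phi$ cannot be an isomorphism: it would have to kill the nonzero same-column composites of $\C\cat^!$ and miss the same-row composites of $(\C\cat)^{\mathrm{op}}$.

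The fix, which is exactly what the paper does, is to take the object map to be transposition $\lambda\mapsto\lambda^T$ rather than the identity: transposing swaps rows and columns, hence swaps the locus where $\C\cat$ vanishes in degree two (same column) with the locus where $\C\cat^!$ vanishes (same row). The paper's functor is $F:\C\cat\to\C\cat^!$, $\lambda\mapsto\lambda^T$, $f_{\mu,\lambda}\mapsto s^\mu_\lambda f_{\mu^T,\lambda^T}^*$, a duality bijective on objects; your sign computation via Lemma \ref{sgnlem} then goes through verbatim for the diamond case because transposition carries diamonds to diamonds. You should also drop the parenthetical ``no diagram transpose is needed'' and the dimension-count argument as stated, since the degree-two dimensions at a fixed pair of vertices genuinely differ between $\C\cat^!(\lambda,\mu)$ and $(\C\cat)^{\mathrm{op}}(\lambda,\mu)$ in the two degenerate cases; the dimensions only match after transposing.
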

\begin{proof}
If we set $\C\mathcal{E}=\C\cat$ in the definition of the quadratic dual from Subsection \ref{koszss}, then the category $\C\mathcal{F}$ is readily seen to be isomorphic to $(\C\cat')^{\text{op}}$. Pick a basis for $\C\cat_1$ consisting of morphisms of the form $f_{\mu,\nu}\in\C\cat_1(\mu,\nu)$ such that for each diamond $(\mu,\nu_1,\nu_2,\lambda)$ we have $f_{\nu_1,\lambda}f_{\mu,\nu_1}=f_{\nu_2,\lambda}f_{\mu,\nu_2}$. The relations $\im(m_{\mu,\lambda}^*)=0$ imposed on $\C\mathcal{F}$ in order to obtain $\C\cat^!$ are of three different kinds depending on $\mu$ and $\lambda$. 

If $\mu\xrightarrow{2}\lambda$, then $m_{\mu,\lambda}=0$, so $m_{\mu,\lambda}^*=0$ and $\dim(\C\cat^!(\lambda,\mu))=\dim(\C\cat'(\mu,\lambda))=1$. 

If $\mu^T\xrightarrow{2}\lambda^T$, then $m_{\mu,\lambda}$ is an isomorphism, and $m_{\mu,\lambda}^*$ is surjective, hence $\C\cat^!(\lambda,\mu)=0$. 

Finally, assume that neither of the above cases holds. Then there is a diamond $(\mu,\nu_1,\nu_2,\lambda)$ such that 
\begin{align*}
\dom(m_{\mu,\lambda})&=\bigoplus_{\nu\in\Ob (\C\cat)}\C\cat_1(\nu,\lambda)\otimes_{\C\cat_0}\C\cat_1(\mu,\nu)\\&= (\C\cat_1(\nu_1,\lambda)\otimes_{\C\cat_0}\C\cat_1(\mu,\nu_1))\oplus (\C\cat_1(\nu_2,\lambda)\otimes_{\C\cat_0}\C\cat_1(\mu,\nu_2)).
\end{align*}
Because we have commutativity of morphisms between the same objects in $\C\cat$, we may pick bases $\{f_{\nu_1,\lambda}\otimes_{\C\cat_0}f_{\mu,\nu_1},f_{\nu_2,\lambda}\otimes_{\C\cat_0}f_{\mu,\nu_2}\}$ in $\dom(m_{\mu,\lambda})$ and $\{f_{\nu_1,\lambda}f_{\mu,\nu_1}\}$ in $\C\cat_2(\mu,\lambda)$ for which 
\begin{equation*}
m_{\mu,\lambda}=\begin{pmatrix} 1&1\end{pmatrix}
\end{equation*}
and thus in the dual bases $\{f_{\mu,\nu_1}^*\otimes_{\C\cat_0}f_{\nu_1,\lambda}^*,f_{\mu,\nu_2}^*\otimes_{\C\cat_0}f_{\nu_2,\lambda}^*\}$ and $\{(f_{\nu_1,\lambda}f_{\mu,\nu_1})^*\}$
\begin{equation*}
m_{\mu,\lambda}^*=\begin{pmatrix} 1&1\end{pmatrix}^T=\begin{pmatrix} 1\\ 1\end{pmatrix}.
\end{equation*}
Therefore we obtain the anticommutativity relation
\begin{equation*}
f_{\mu,\nu_1}^*\otimes_{\C\cat_0}f_{\nu_1,\lambda}^*+f_{\mu,\nu_2}^*\otimes_{\C\cat_0}f_{\nu_2,\lambda}^*=0
\end{equation*}
in $\C\cat^!$. 

Then clearly the functor 
\begin{align*}
F:\C\cat&\rightarrow \C\cat^!\\
\lambda&\mapsto\lambda^T\\
f_{\mu,\lambda}&\mapsto s^\mu_\lambda f_{\mu^T,\lambda^T}^*,
\end{align*}
where the signs $s^\mu_\lambda$ are picked as in Lemma \ref{sgnlem}, is a duality of categories that is bijective on objects. The desired result follows. 
\end{proof}
We note that a very similar proof gives us also the following description of the quadratic dual of the (linearized) Young lattice. 
\begin{myprop} \label{ppp1}
There is an isomorphism $\C{\cat'}^!\cong (\C\cat'/\mathcal{J})^{\text{op}}$, where $\mathcal{J}$ is the ideal generated by all $\C\cat'(\mu,\lambda)$ with $\mu\xrightarrow{2}\lambda$ or $\mu^T\xrightarrow{2}\lambda^T$.
\end{myprop}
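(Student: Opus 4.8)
The plan is to mimic the proof of Theorem~\ref{selfdualthm} verbatim, simply dropping the quotient by $\mathcal{I}$ that produced $\C\cat$ from $\C\cat'$ and tracking which relations survive. First I would set $\C\mathcal{E}=\C\cat'$ in the definition of the quadratic dual from Subsection~\ref{koszss}. Exactly as before, the "free" category $\C\mathcal{F}$ built on $(\C\cat'_1)^*$ is isomorphic to $(\C\cat')^{\mathrm{op}}$, because the underlying quiver of $\C\cat'$ is again $Q$ (the Hasse diagram of the Young lattice has no $\xrightarrow{2}$-arrows of length one to kill), and dualizing each degree-one morphism space just reverses arrows. So $\C{\cat'}^!$ is a quotient of $(\C\cat')^{\mathrm{op}}$ by the ideal generated by the images $\im(m_{X,Y}^*)$.

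Next I would analyze $m_{\mu,\lambda}^*$ for each pair $\mu,\lambda$ with $\lambda$ obtained from $\mu$ by adding two nodes (these are the only pairs where degree-two morphism spaces and their domains are nontrivial). There are three cases, just as in Theorem~\ref{selfdualthm}, but now neither the source nor the target category has relations, so the combinatorics is cleaner. If the two added nodes lie in the same column, i.e. $\mu\xrightarrow{2}\lambda$, then in $\C\cat'$ there is a unique path $\mu\to\cdot\to\lambda$ (only one intermediate diagram is addable, so $\dim\C\cat'_2(\mu,\lambda)=1$) while $\dim\dom(m_{\mu,\lambda})=1$ as well, and $m_{\mu,\lambda}$ is an isomorphism; hence $m_{\mu,\lambda}^*$ is surjective and $\C{\cat'}^!(\lambda,\mu)=0$. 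Dually, if the two added nodes lie in the same row, i.e. $\mu^T\xrightarrow{2}\lambda^T$, then there are two intermediate diagrams, $\dim\C\cat'_2(\mu,\lambda)=1$ by commutativity of the two length-two paths, and $m_{\mu,\lambda}=\begin{pmatrix}1&1\end{pmatrix}$, so again $m_{\mu,\lambda}^*$ is surjective and $\C{\cat'}^!(\lambda,\mu)=0$. In the remaining case the two added nodes are in distinct rows and distinct columns, $\dim\dom(m_{\mu,\lambda})=2$, $\dim\C\cat'_2(\mu,\lambda)=1$, and after choosing the commutative basis for $\C\cat'_1$ we get $m_{\mu,\lambda}=\begin{pmatrix}1&1\end{pmatrix}$ and $m_{\mu,\lambda}^*=\begin{pmatrix}1\\1\end{pmatrix}$, yielding an anticommutativity relation among the two dual length-two paths but no path being set to zero.

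From this I would read off that $\C{\cat'}^!$ is isomorphic to $(\C\cat')^{\mathrm{op}}$ modulo the ideal generated by all $\C\cat'(\mu,\lambda)$ with $\mu\xrightarrow{2}\lambda$ or $\mu^T\xrightarrow{2}\lambda^T$ (the surviving anticommutativity relations being precisely the relations defining $\C\cat'$ itself, after transposing and inserting signs $s^\mu_\lambda$ as in Lemma~\ref{sgnlem}). Concretely, the functor $F\colon\C\cat'/\mathcal{J}\to\C{\cat'}^!$ sending $\lambda\mapsto\lambda^T$ and a degree-one generator $f_{\mu,\lambda}\mapsto s^\mu_\lambda f_{\mu^T,\lambda^T}^*$ is well-defined—Lemma~\ref{sgnlem} guarantees the diamond relations $f_{\nu_1,\lambda}f_{\mu,\nu_1}=f_{\nu_2,\lambda}f_{\mu,\nu_2}$ in $\C\cat'/\mathcal{J}$ map to the anticommutativity relations just derived—and it is bijective on objects and on degree-one morphism spaces, hence an isomorphism of categories. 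The main (and essentially only) obstacle is the bookkeeping in the "distinct rows, distinct columns" case: one must be careful that the unique intermediate diagram contributing to $\C\cat'_2$ after the quotient is the right one and that the sign $s^\mu_\lambda$ produced by Lemma~\ref{sgnlem} correctly converts the commutativity relations of $\C\cat'$ into the anticommutativity relations of its dual; but this is exactly the argument already carried out for $\C\cat$ in Theorem~\ref{selfdualthm}, with the two "kill the path" cases ($\mu\xrightarrow{2}\lambda$ and $\mu^T\xrightarrow{2}\lambda^T$) now treated symmetrically rather than one of them having already been quotiented away. Hence the "very similar proof" remark is accurate, and one may simply write: the proof is identical to that of Theorem~\ref{selfdualthm} except that neither relation of the form $\mu\xrightarrow{2}\lambda$ is imposed at the outset, so both such families of morphisms instead get killed in passing to the quadratic dual, giving $\C{\cat'}^!\cong(\C\cat'/\mathcal{J})^{\mathrm{op}}$ with $\mathcal{J}$ as stated.
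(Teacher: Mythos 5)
Your proposal is correct and follows exactly the route the paper intends --- the paper's own justification is literally that ``a very similar proof'' to that of Theorem \ref{selfdualthm} applies, which is what you carry out. One small factual slip: when the two added nodes lie in the same row (the case $\mu^T\xrightarrow{2}\lambda^T$) there is only \emph{one} intermediate diagram, not two, since the right-hand node becomes addable only after the left-hand one; hence $m_{\mu,\lambda}$ is a $1\times 1$ isomorphism rather than the matrix $(1\ \ 1)$, but your conclusion that $m_{\mu,\lambda}^*$ is surjective and $\C{\cat'}^!(\lambda,\mu)=0$ is unaffected.
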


\subsection{Linear resolutions for simple $\C\cat$-modules}\label{sss2}
We will in this section construct explicit linear resolutions of the simple $\C\cat$-modules. 

Fix some diagram $\xi$. Let $I_i$ be the set of diagrams that can be obtained by adding $-i$ nodes, no two of which to the same row, to $\xi$. Define
\begin{equation*}
\mathcal{P}^i=
\begin{cases}
        \bigoplus_{\lambda\in I_i}P_{\lambda}\langle i\rangle, & \mbox{for } i\le 0\\
        0, & \mbox{ for $i>0$.}
        \end{cases} 
\end{equation*} 

Fix non-zero elements $v^{\lambda,i}_{\lambda}\in P_{\lambda}\langle i\rangle(\lambda)$. Consider a diagram $\mu$. If $\mu$ is a subquotient of $P_\lambda\langle i\rangle$, we have a uniquely determined $x\in\cat(\lambda,\mu)$. Otherwise set $x=0$.  Finally define $v^{\lambda,i}_{\mu}=x\cdot v^{\lambda,i}_{\lambda}$, which is a basis element of the subquotient $\mu$  in $P_{\lambda}\langle i\rangle$, provided that $\mu$ is a subquotient of $P_{\lambda}$, and 0 otherwise. 

Define the maps 
\begin{align*}
\pi_{i,\lambda}: \mathcal{P}^i&\rightarrow \mathcal{P}^{i+1}\\
v^{\mu,i}_{\mu}&\mapsto s^{\lambda}_{\mu}v^{\lambda,i+1}_{\mu},
\end{align*}
where the sign $s^\lambda_\mu$ is the one defined in Lemma \ref{sgnlem}, and also
\begin{equation*}
\delta^i=\begin{cases}
        \bigoplus_{\lambda\in I_{i+1}}\pi_{i,\lambda}, & \mbox{for } i< 0\\
        0, & \mbox{ for $i\ge 0$.}
        \end{cases}
\end{equation*}

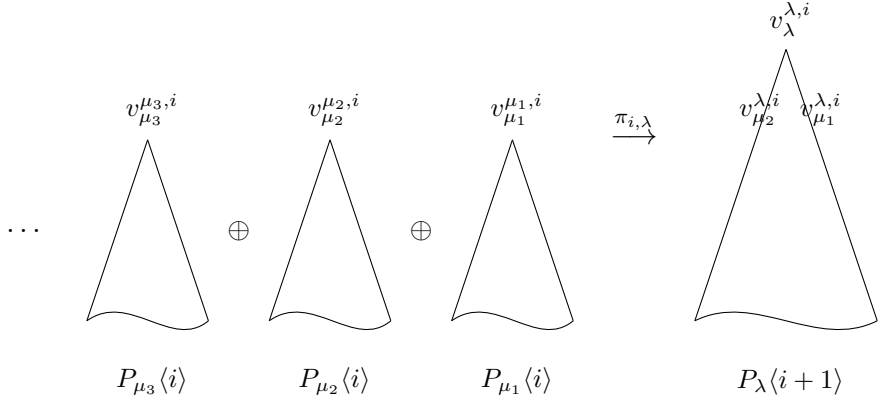
\begin{figure}
\special{em:linewidth 0.4pt} \unitlength 0.80mm
\begin{picture}(140.00,75.00)
\Line(10,15)(20,45)
\Line(20,45)(30,15)
\cbezier[1](10,15)(17,20)(24,10)(30,15)
\put(20,5){\makebox(0,0)[cc]{ $P_{\mu_3}\langle i\rangle$}}
\put(20,50){\makebox(0,0)[cc]{ $v_{\mu_3}^{\mu_3,i}$}}

\put(100,47){\makebox(0,0)[cc]{\large $\overset{\pi_{i,\lambda}}\longrightarrow$}}
\put(35,30){\makebox(0,0)[cc]{\large $\oplus$}}
\put(65,30){\makebox(0,0)[cc]{\large $\oplus$}}
\put(0,30){\makebox(0,0)[cc]{\large $\dots$}}

\Line(70,15)(80,45)
\Line(80,45)(90,15)
\cbezier[1](70,15)(77,20)(84,10)(90,15)
\put(80,5){\makebox(0,0)[cc]{ $P_{\mu_1}\langle i\rangle$}}
\put(80,50){\makebox(0,0)[cc]{ $v_{\mu_1}^{\mu_1,i}$}}

\Line(40,15)(50,45)
\Line(50,45)(60,15)
\cbezier[1](40,15)(47,20)(54,10)(60,15)

\put(50,5){\makebox(0,0)[cc]{ $P_{\mu_2}\langle i\rangle$}}
\put(50,50){\makebox(0,0)[cc]{ $v_{\mu_2}^{\mu_2,i}$}}

\Line(110,15)(125,60)
\Line(125,60)(140,15)
\cbezier[1](110,15)(120,20)(130,10)(140,15)
\put(125,5){\makebox(0,0)[cc]{ $P_{\lambda}\langle i+1\rangle$}}
\put(125,65){\makebox(0,0)[cc]{ $v_{\lambda}^{\lambda,i}$}}

\put(120,50){\makebox(0,0)[cc]{ $v_{\mu_2}^{\lambda,i}$}}

\put(130,50){\makebox(0,0)[cc]{ $v_{\mu_1}^{\lambda,i}$}}


\end{picture}
\caption{The map $\pi_{i,\lambda}$ maps $\mathcal{P}^i$ to the component $P_\lambda\langle i+1\rangle$ only. In the picture, only $\mu_1$ and $\mu_2$ occur as subquotients of $P_{\lambda}\langle i+1\rangle$ at the degree $i$ level.}
\end{figure}
\begin{mythm}
\label{koszthm}
The modules $\mathcal{P}^i$ and maps $\delta^i$ form a linear resolution, $\mathcal{P}^\bullet$, of $L_{\xi}$. 
\end{mythm}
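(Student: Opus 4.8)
The plan is to verify the three requirements in the definition of a linear resolution. That the $\mathcal{P}^i$ are projective graded modules and that $\mathcal{P}^{-n}$ is generated in degree $n$ (linearity) is immediate, since $P_\lambda\langle i\rangle$ is generated in degree $-i$ and each $\delta^i$ is degree-preserving; so the content is that $\mathcal{P}^\bullet$, augmented by the canonical surjection $\varepsilon:\mathcal{P}^0=P_\xi\to L_\xi$, is exact. I will freely use that $\C\cat(\lambda,\mu)$ is one-dimensional if $\lambda\subseteq\mu$ and $\mu/\lambda$ is a horizontal strip and zero otherwise (immediate from the definition of $\cat$), and that the nonzero composite of a chain of arrows in $\C\cat$ is canonical (commutativity, Theorem \ref{quiverthm}).

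First I would check $\delta^{i+1}\circ\delta^i=0$ and $\varepsilon\circ\delta^{-1}=0$. Evaluating on a generator $v^{\nu,i}_\nu$ of $\mathcal{P}^i$ (so $\nu\in I_i$), the component of $\delta^{i+1}\delta^i(v^{\nu,i}_\nu)$ in the summand $P_\lambda\langle i+2\rangle$ — where $\lambda$ is $\nu$ with two cells deleted — equals $\bigl(\sum_\mu s^\mu_\nu s^\lambda_\mu\bigr)$ times the canonical element of $\C\cat(\lambda,\nu)$, the sum over the diagrams $\mu$ with $\lambda\subset\mu\subset\nu$. Because $\nu\in I_i$ makes $\nu/\xi$, hence $\nu/\lambda$, a vertical strip, the two deleted cells lie in different rows. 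If they also lie in different columns, there are precisely two such $\mu$, and $(\lambda,\mu_1,\mu_2,\nu)$ is a diamond, so the two signs cancel by Lemma \ref{sgnlem}; if they lie in the same column, then $\lambda\xrightarrow{2}\nu$, so $\C\cat(\lambda,\nu)=0$. Either way the component vanishes, so $\delta^{i+1}\delta^i=0$. Moreover $\im\delta^{-1}\subseteq\rad P_\xi=\ker\varepsilon$, and since $\delta^{-1}$ carries the generators of $\mathcal{P}^{-1}$ onto the degree-one generators of $\rad P_\xi$, one gets $\im\delta^{-1}=\rad P_\xi$, i.e.\ exactness at $\mathcal{P}^0$.

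It remains to prove exactness in negative degrees, which I would do object by object. For $\rho$ not containing $\xi$ all terms and $L_\xi(\rho)$ vanish, and for $\rho=\xi$ the complex is $\C$ in degree $0$ mapping isomorphically onto $L_\xi(\xi)$. For $\rho\supsetneq\xi$, linearity forces $\mathcal{P}^\bullet(\rho)$ into the single internal degree $|\rho|-|\xi|$, and unwinding the definitions identifies it with the complex of vector spaces whose degree-$(-k)$ basis is the set of Young diagrams $\eta$ with $\xi\subseteq\eta\subseteq\rho$, $\eta/\xi$ a vertical strip of size $k$, and $\rho/\eta$ a horizontal strip, the differential being $\eta\mapsto\sum_c s^{\eta\setminus c}_\eta\,(\eta\setminus c)$ over removable corners $c\notin\xi$ of $\eta$ for which $\rho/(\eta\setminus c)$ is again a horizontal strip. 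I would prove this complex contractible by toggling the lowest cell $c_0$ of the leftmost column of $\rho/\xi$: since that column is leftmost, the horizontal-strip condition on $\rho/\eta$ forces any $\eta$ in the index set not containing $c_0$ to contain all the other cells of $\rho/\xi$ in $c_0$'s column, so that $\eta\cup\{c_0\}$ is again in the index set with $c_0$ a removable corner of it; hence $s(\eta):=0$ if $c_0\in\eta$ and $s(\eta):=\pm(\eta\cup\{c_0\})$ otherwise is well defined, and one verifies $\delta s+s\delta=\id$.

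I expect the sign bookkeeping in this last step to be the main obstacle: checking that the signs produced by Lemma \ref{sgnlem} assemble into the homotopy identity $\delta s+s\delta=\id$ exactly, rather than only up to a unit on each basis vector. The cleanest way around it is to rescale the basis vectors — permissible because $\delta^2=0$ — so that $\mathcal{P}^\bullet(\rho)$ becomes literally a tensor product, over the columns of $\rho/\xi$, of two-term complexes $\C\to\C$ and one-dimensional complexes, in which the leftmost column gives the factor $\C\xrightarrow{1}\C$; that factor is acyclic, hence so is the whole tensor product. This settles the case $\rho\supsetneq\xi$; combining the three cases, $\mathcal{P}^\bullet\to L_\xi$ is a projective resolution, and it is linear by construction, so it is the desired linear resolution.
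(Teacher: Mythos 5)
Your proposal is correct in substance but takes a genuinely different route from the paper for the exactness part. The paper establishes $\delta^{i+1}\circ\delta^i=0$ via Lemma \ref{sgnlem} just as you do, but it then proves exactness in negative degrees module-theoretically, in two steps: first that $\ker(\delta^{i+1})\cap(\mathcal{P}^{i+1})_{-i}\subseteq\im(\delta^i)$ (a uniqueness-up-to-scalar computation with the signs), and then that $\ker(\delta^{i+1})$ is generated in degree $-i$ (by comparing the images of two maps $\varphi'_{i+1,\lambda}$ and $\varphi_{i+1,\lambda}$ through a count of simple subquotients). You instead evaluate the complex at each object $\rho$ and contract the resulting complex of vector spaces combinatorially. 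Your identification of $\mathcal{P}^\bullet(\rho)$ with the complex on diagrams $\eta$ such that $\eta/\xi$ is a vertical strip and $\rho/\eta$ a horizontal strip is right, and toggling the lowest cell $c_0$ of the leftmost column of $\rho/\xi$ does give a perfect matching on this basis: the horizontal-strip condition forces any $\eta$ omitting $c_0$ to contain the rest of that column, so $\eta\cup\{c_0\}$ is again a Young diagram in the index set with $c_0$ a removable corner, and conversely $\eta\setminus\{c_0\}$ stays in the index set when $c_0\in\eta$. This buys a more elementary and fully explicit acyclicity proof (a contracting homotopy rather than a generation argument), at the cost of unwinding the projectives pointwise.

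Two remarks on the details. First, the sign bookkeeping you flag as the main obstacle is in fact no obstacle: setting $s(\eta)=s^{\eta}_{\eta\cup\{c_0\}}\,(\eta\cup\{c_0\})$ for $c_0\notin\eta$ and $s(\eta)=0$ otherwise, the diagonal terms of $\delta s+s\delta$ are $(s^{\eta}_{\eta\cup\{c_0\}})^2=1$, and each cross term pairs the corner $c\neq c_0$ occurring in $\delta(\eta\cup\{c_0\})$ with the same corner occurring in $\delta(\eta)$; the one exceptional corner, the cell directly above $c_0$, contributes zero on both sides (removing it from $\eta$ leaves two cells of the skew shape in the column of $c_0$, so the target basis vector vanishes, and it is not removable from $\eta\cup\{c_0\}$), and the remaining pairs cancel precisely by the diamond relation of Lemma \ref{sgnlem}. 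Second, your fallback --- rescaling so that $\mathcal{P}^\bullet(\rho)$ becomes a tensor product over the columns of $\rho/\xi$ --- is not correct as stated: the vertical-strip and Young-diagram constraints couple the columns, e.g.\ for $\xi=\varnothing$ and $\rho=(2,2)$ the index set is empty although each of the two columns separately admits two states. Since the direct sign check goes through, that remark should simply be dropped; with it removed, your argument is complete.
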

\begin{proof}
We divide the proof into several parts, the first two of which are obvious.

\begin{enumerate}[leftmargin=*]
\item[$($i$)$]

\emph{If $\mathcal{P}^\bullet$ is a resolution, then it is a linear one. }

\item[$($ii$)$]

\emph{The cohomology at position 0 is $L_{\xi}$.}

\end{enumerate}

It remains to consider the cohomology positions less than 0. Fix therefore for the remainder of the proof some $i\le -2$. 

\begin{enumerate}[leftmargin=*] 
\item[$($iii$)$]

\emph{$\mathcal{P}^\bullet$ is a complex.}\\ 
It suffices to check that $\pi_{i+1,\lambda}\circ\delta^i(v^{\nu,i}_{\nu})=0$ for all $\nu\in I_i$ and $\lambda\in I_{i+2}$. It will never be the case that $\nu$ is obtained from $\lambda$ by adding 2 nodes to the same row, by construction of $\mathcal{P}^i$. 

If $\nu$ can not be obtained from $\lambda$ at all by adding 2 nodes, or if $\nu$ is obtained from $\lambda$ by adding 2 nodes to the same column, then the result is immediate because there is no subquotient $\nu$ in $P_{\lambda}$. In the remaining case, there exist unique $\mu_1,\mu_2\in I_{i+1}$ that are subquotients of $P_{\lambda}$ such that $(\lambda,\mu_1,\mu_2,\nu)$ is a diamond. Then 
\begin{equation*}
\pi_{i+1,\lambda}\circ\delta^i(v^{\nu,i}_{\nu})=s^{\lambda}_{\mu_1}s^{\mu_1}_{\nu}v^{\lambda,i+2}_{\nu}+s^{\lambda}_{\mu_2}s^{\mu_2}_{\nu}v^{\lambda,i+2}_{\nu}=0,
\end{equation*}
using Lemma \ref{sgnlem}. 

\end{enumerate}

We want to show that we conversely have $\ker(\delta^{i+1})\subset\im(\delta^i)$, so that $\mathcal{P}^\bullet$ is exact at positions less than 0. We first show only a partial statement.

\begin{enumerate}[leftmargin=*] 
\item[$($iv$)$]

\emph{We have $\ker(\delta^{i+1})\cap(\mathcal{P}^{i+1})_{-i}\subset\im(\delta^i)$.}\\
 Note that $\ker(\delta^{i+1})\cap(\mathcal{P}^{i+1})_{-i}$ is spanned by vectors of the form 
\begin{equation*} 
v_\nu=\sum_{\mu\in I_{i+1}}k_{\mu}v^{\mu,i+1}_{\nu},
\end{equation*}
where $\nu$ is some diagram obtained by adding a node to some $\mu\in I_{i+1}$. Assuming that $v_\nu\ne 0$, we get two cases depending on whether $\nu\in I_i$ or not. 

If $\nu\not\in I_i$, then there must be precisely one $\mu\in I_{i+1}$ such that $v_{\nu}^{\mu,i+1}\ne 0$: By assumption there is at least one such $\mu$, which must be obtained from $\nu$ by removing a node from a row where $\nu$ has two more nodes than $\xi$. But if there were to exist more than one such row, then $\mu\not\in I_{i+1}$. Therefore the row and hence $\mu$ is uniquely determined. 
That 
\begin{equation*}
\delta^{i+1}(v_\nu)=0
\end{equation*}
then implies that for all $\lambda\in I_{i+2}$ with $\lambda\rightarrow\mu$ also
\begin{equation*}
0=\pi_{i+1,\lambda}(k_\mu v^{\mu,i+1}_{\nu})=s^\lambda_\mu k_\mu v^{\lambda,i+1}_{\nu},
\end{equation*}
so that $k_{\mu}=0$ and hence $v_\nu=0\in\im(\delta^i)$. 

If on the other hand $\nu\in I_i$, again fix some $\mu\in I_{i+1}$ with $v^{\mu,i+1}_{\nu}\ne 0$. For any other $\mu'\in I_{i+1}$ with $\mu'\rightarrow\nu$, there is a diamond $(\lambda,\mu,\mu',\nu)$. Then 
\begin{equation*}
\delta^{i+1}(v_\nu)=0
\end{equation*}
implies 
\begin{equation*}
0=\pi_{i+1,\lambda}(v_\nu)=k_{\mu}s^{\lambda}_{\mu}s^{\mu}_{\nu}v^{\lambda,i+2}_{\nu}+k_{\mu'}s^{\lambda}_{\mu'}s^{\mu'}_{\nu}v^{\lambda,i+2}_{\nu},
\end{equation*}
so if $v_\nu^{\mu',i+1}\ne 0$, we get
\begin{equation*}
k_{\mu'}=-\frac{k_\mu s^{\lambda}_{\mu}s^{\mu}_{\nu}}{s^{\lambda}_{\mu'}s^{\mu'}_{\nu}}.
\end{equation*}
In particular, $v_\nu$ is unique up to a scalar, hence must be a scalar multiple of $\delta^{i}(v^{\nu,i}_{\nu})$. 

\end{enumerate}

The previous part and the next part together imply that $\ker(\delta^{i+1})\subset\im(\delta^i)$.

\begin{enumerate}[leftmargin=*] 
\item[$($v$)$]


\emph{$\ker(\delta^{i+1})$ is generated in degree $-i$.}\\
For an arbitrary $\lambda\in I_{i+2}$, let $I_{i+1}^{\lambda}\subset I_{i+1}$ be the subset consisting of diagrams obtainable from $\lambda$ by adding one node. Then there is a commutative diagram
\begin{equation*}
\xymatrixcolsep{5pc}
\xymatrix{
\mathcal{P}^{i+1}\ar[r]^-{\pi_{i+1,\lambda}}\ar[d]_{\text{projection}}&\mathcal{P}^{i+2}\\
\bigoplus_{\mu\in I_{i+1}^{\lambda}}P_{\mu}\langle i+1\rangle \ar[ur]_-{\varphi'_{i+1,\lambda}}\ar@{->>}[r]_-{\varphi_{i+1,\lambda}}& (\bigoplus_{\mu\in I_{i+1}^{\lambda}}P_{\mu}\langle i+1\rangle)/(\ker(\varphi'_{i+1,\lambda})_{-i}\ar@{^{(}->}[u]
}
\end{equation*}

\begin{figure}
\special{em:linewidth 0.4pt} \unitlength 0.80mm
\begin{picture}(180.00,75.00)

\put(100,47){\makebox(0,0)[cc]{\large $\overset{\varphi'_{i,\lambda}}\longrightarrow$}}

\put(65,30){\makebox(0,0)[cc]{\large $\oplus$}}

\Line(70,15)(80,45)
\Line(80,45)(90,15)
\cbezier[1](70,15)(77,20)(84,10)(90,15)
\put(80,5){\makebox(0,0)[cc]{ $P_{\mu_1}\langle i\rangle$}}
\put(80,50){\makebox(0,0)[cc]{ $v_{\mu_1}^{\mu_1,i}$}}

\Line(40,15)(50,45)
\Line(50,45)(60,15)
\cbezier[1](40,15)(47,20)(54,10)(60,15)

\put(50,5){\makebox(0,0)[cc]{ $P_{\mu_2}\langle i\rangle$}}
\put(50,50){\makebox(0,0)[cc]{ $v_{\mu_2}^{\mu_2,i}$}}

\Line(110,15)(125,60)
\Line(125,60)(140,15)
\cbezier[1](110,15)(120,20)(130,10)(140,15)
\put(125,5){\makebox(0,0)[cc]{ $P_{\lambda}\langle i+1\rangle$}}
\put(125,65){\makebox(0,0)[cc]{ $v_{\lambda}^{\lambda,i+1}$}}

\put(120,50){\makebox(0,0)[cc]{ $v_{\mu_2}^{\lambda,i+1}$}}

\put(132,50){\makebox(0,0)[cc]{ $v_{\mu_1}^{\lambda,i+1}$}}


\end{picture}
\caption{The map $\varphi'_{i,\lambda}$ maps $\bigoplus_{\mu\in I^\lambda_i} P_\mu\langle i\rangle$ to $P_\lambda\langle i+1\rangle$. In the picture we have $I_i^\lambda=\{ \mu_1,\mu_2\}$.}
\end{figure}
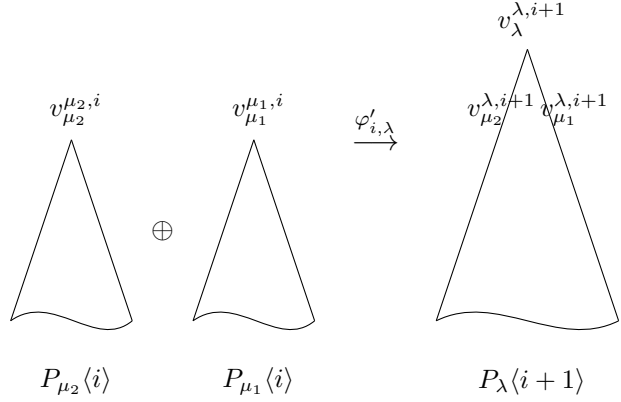
The desired result follows if we can show that $\im(\varphi'_{i+1,\lambda})=\im(\varphi_{i+1,\lambda})$. 

Clearly $\varphi'_{i+1,\lambda}$ is injective in degree $-i-1$, so in particular we have that as $\C\cat$-modules 
\begin{equation*}
\im(\varphi'_{i+1,\lambda})=\langle v_{\mu}^{\lambda,i+2}|\mu\in I_{i+1}\rangle\subset P_{\lambda}\langle i+2\rangle.
\end{equation*}
Thus $\im(\varphi'_{i+1,\lambda})$ will contain as subquotients precisely one copy of each diagram $\zeta$ satisfying $\mu\rightarrow\zeta$ but $\lambda\not\xrightarrow{2}\zeta$ for some $\mu\in I_{i+1}^{\lambda}$. We will show that $\im \varphi_{i+1,\lambda}$ contains no more subquotients than this. The argument is further subdivided into three parts. 

\begin{enumerate}
\item

We will below have use of the following argument. Let $\mu_1,\mu_2\in I_{i+1}^{\lambda}$ be different with $\nu$ a diagram satisfying $\mu_1\rightarrow\nu$ and $\mu_2\rightarrow\nu$. Let further $x\in\cat(\mu_1,\nu)$ and $y\in \cat(\mu_2,\nu)$. There is a diamond $(\lambda,\mu_1,\mu_2,\lambda')$, and we have in $\cat$ the commutative diagram
\begin{equation*}
\xymatrix{
\mu_1\ar[ddr]_-x\ar[dr]&&\mu_2\ar[ddl]^-y\ar[dl]\\
&\lambda'\ar[d]^-z&\\
&\nu&
}
\end{equation*}

Using that $P_\lambda\langle i+2\rangle$ has only a single copy of $\lambda'$ as a subquotient, and that $v_{\lambda'}^{\mu_1,i+1}, v_{\lambda'}^{\mu_2,i+1}\in (\mathcal{P}^{i+1})_{-i}$, we obtain
\begin{align*}
\varphi_{i+1,\lambda}(\C v_{\nu}^{\mu_1,i+1})&=z\varphi_{i+1,\lambda}(\C v_{\lambda'}^{\mu_1,i+1})\\
&=z\varphi'_{i+1,\lambda}(\C v_{\lambda'}^{\mu_1,i+1})\\
&=z\C v_{\lambda'}^{\lambda,i+2}\\
&=z\varphi'_{i+1,\lambda}(\C v_{\lambda'}^{\mu_2,i+1})\\
&=z\varphi_{i+1,\lambda}(\C v_{\lambda'}^{\mu_2,i+1})\\
&=\varphi_{i+1,\lambda}(\C v_{\nu}^{\mu_2,i+1}).
\end{align*}

\item

In particular we get from part (a) that if $\nu$ is a subquotient of both $P_{\mu_1}\langle i+1\rangle$ and $P_{\mu_2}\langle i+1\rangle$, then these are identified by $\varphi_{i+1,\lambda}$, so that $\im(\varphi_{i+1,\lambda})$ contains at most one copy of $\nu$ as a subquotient. 

\item

Let $\lambda\xrightarrow{2}\nu$, with $\nu$ having at least two more nodes in column $k$ than $\lambda$. \emph{Then $\im(\varphi_{i+1,\lambda})$ has no subquotient $\nu$.} \\
In order to show this, we show that whenever there is a $\mu_1\in I_{i+1}^{\lambda}$ such that $\nu$ is a subquotient of $P_{\mu_1}\langle i+1\rangle$, we have $\varphi_{i+1,\lambda}(\C v_{\nu}^{\mu_1,i+1})=0$. For $\nu$ to be a subquotient of $P_{\mu_1}\langle i+1\rangle$ it is necessary that $\mu_1$ is obtained from $\lambda$ by adding a node to column $k$. One of two cases will hold. 

\begin{figure}
\special{em:linewidth 0.4pt} \unitlength 0.80mm
\begin{picture}(180.00,75.00)

\Line(80,15)(130,15)
\Line(110,15)(110,25)
\Line(110,25)(120,25)
\Line(120,25)(120,35)
\Line(120,35)(130,35)
\Line(130,15)(130,55)
\Line(120,15)(120,25)
\Line(120,25)(130,25)
\put(125,30){\makebox(0,0)[cc]{ $b_1$}}
\put(125,20){\makebox(0,0)[cc]{ $b_3$}}
\put(115,20){\makebox(0,0)[cc]{ $b_2$}}
\put(125,5){\makebox(0,0)[cc]{ $k$}}
\put(100,35){\makebox(0,0)[cc]{ $\lambda$}}
\put(125,10){\makebox(0,0)[cc]{ $\uparrow$}}

\end{picture}
\caption{Illustration of the diagrams of step (c). If we to column $k$ add node $b_1$ to $\lambda$ we obtain $\mu_1$. In the first case considered, $b_2$ belongs already to $\lambda$, and the diagram $\lambda'$ is obtained by adding node $b_3$. In the second case, node $b_2$ does not belong to $\lambda$, but is perhaps the node added to $\lambda$ in order to obtain $\mu_2$. }
\end{figure}
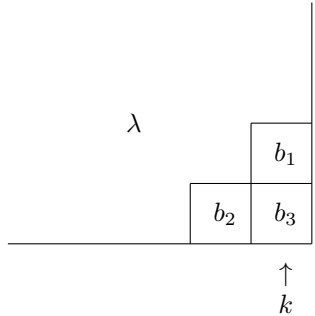

The first is that it is possible to add to $\mu_1$ a node to column $k$ and thereby obtain another diagram $\lambda'$. Let $a\in\cat(\mu_1,\lambda')$. Then, using that $v_{\lambda'}^{\mu_1,i+1}\in (\mathcal{P}^{i+1})_{-i}$, we get
\begin{align*}
\varphi_{i+1,\lambda}(\C v_{\nu}^{\mu_1,i+1})&=a\varphi_{i+1,\lambda}(\C v_{\lambda'}^{\mu_1,i+1})=\\a\varphi'_{i+1,\lambda}(\C v_{\lambda'}^{\mu_1,i+1})&=a\C v_{\lambda'}^{\lambda,i+1}=0.
\end{align*}

The second case is that one has to add to $\mu_1$ at least one node to a column to the left of $k$ before more nodes can be added to $k$. Let $\mu_2$ be obtained by adding one such node to $\lambda$. Thus $\mu_1\rightarrow\nu$ and $\mu_2\rightarrow\nu$. Note that $\nu$ is not a subquotient of $P_{\mu_2}\langle i+1\rangle$. Hence we may again apply part (a) to obtain 
\begin{equation*}
\varphi_{i+1,\lambda}(\C v_{\nu}^{\mu_1,i+1})=\varphi_{i+1,\lambda}(\C v_{\nu}^{\mu_2,i+1})=0,
\end{equation*}
and we are done. 
\end{enumerate}

\end{enumerate}


\end{proof}

The linear resolution given in Theorem \ref{koszthm} of course proves constructively the following corollary. 

\begin{mycor}
\label{koszcor1}
The category $\C\cat$ is Koszul.
\end{mycor}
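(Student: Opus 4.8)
The plan is simply to unwind the definition of Koszulity recorded in Subsection \ref{koszss} and observe that Theorem \ref{koszthm} already supplies exactly the resolutions required. First I would recall that a simple object $L$ in $\C\cgmod$ satisfying $L_0=L$ is, up to isomorphism, one of the one-dimensional tops $L_\xi$ concentrated in degree $0$: the simple $\C\cat$-modules are indexed by the vertices of $Q$, i.e.\ by Young diagrams $\xi$, and the grading of $\C\cat$ described in Subsections \ref{s11} and \ref{ss23} forces the top of $P_\xi$ to sit in degree $0$. Thus the simples with $L=L_0$ are precisely the $L_\xi$, with $\xi$ ranging over all Young diagrams.

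Next I would note that for each such $\xi$ the complex $\mathcal{P}^\bullet$ constructed just before Theorem \ref{koszthm} is a graded projective resolution of $L_\xi$ in the sense of Subsection \ref{koszss}: for $i\le 0$ we have $\mathcal{P}^i=\bigoplus_{\lambda\in I_i}P_\lambda\langle i\rangle$, a finite direct sum of degree shifts of the indecomposable projectives $P_\lambda=\C\cat(\lambda,\_)$, hence projective in $\C\cgmod$; and Theorem \ref{koszthm} states precisely that $(\mathcal{P}^\bullet,\delta^\bullet)$ is a resolution of $L_\xi$. Finally one checks the linearity condition: since $P_\lambda\langle i\rangle$ is generated in degree $-i$, the module $\mathcal{P}^{-n}=\bigoplus_{\lambda\in I_{-n}}P_\lambda\langle -n\rangle$ is generated by $(\mathcal{P}^{-n})_n$, which is exactly the requirement that $\mathcal{P}^\bullet$ be a linear resolution. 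As $\xi$ was arbitrary, every simple $L$ with $L_0=L$ admits a linear resolution, i.e.\ $\C\cat$ is Koszul.

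There is essentially no obstacle beyond this bookkeeping — the entire mathematical content lies in Theorem \ref{koszthm} — so the only point needing care is matching indexing conventions: the cohomological indexing used in the construction ($\mathcal{P}^i$ nonzero for $i\le 0$, cohomology in degree $0$) must be identified, via the substitution $i=-n$, with the homological indexing ($\mathcal{P}^{-n}$ for $n\ge 0$) appearing in the definition of a linear resolution, and one should confirm that each $I_i$ is finite so that the $\mathcal{P}^i$ genuinely lie in $\C\cgmod$.
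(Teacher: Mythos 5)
Your proposal is correct and matches the paper exactly: the paper likewise deduces the corollary immediately from Theorem \ref{koszthm}, which supplies the linear resolutions of the simples $L_\xi$, and your extra bookkeeping (identifying the simples with $L_0=L$ as the $L_\xi$ and checking that each $\mathcal{P}^{-n}$ is generated in degree $n$) is just the unwinding the paper leaves implicit.
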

Also using Corollary \ref{greqcor} one obtains the following.
\begin{mycor}
\label{koszcor2}
The category $\C\inj$ is Koszul. 
\end{mycor}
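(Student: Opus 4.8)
The plan is to deduce the Koszulity of $\C\inj$ from that of $\C\cat$ (Corollary \ref{koszcor1}) by transporting linear resolutions along the equivalence of Corollary \ref{greqcor}. First I would observe that $\C\inj\text{-gMod}\cong\C\cgmod$ is an equivalence of \emph{graded} module categories: it is induced by the idempotents $e'_{t^1_\mu}$, which lie in $\C\inj(\underline{n},\underline{n})$, i.e.\ in the degree-zero part of $\C\inj$, so the functor implementing the Morita equivalence of Theorem \ref{quiverthm} preserves degrees of morphisms, and hence the equivalence of Corollary \ref{greqcor} commutes with the shift functors $\langle n\rangle$ on both sides.

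Next I would spell out what such an equivalence preserves. It carries graded projective modules to graded projective modules and simple graded modules to simple graded modules, and — being degree-compatible — it preserves the condition $L_0=L$ on a simple module, as well as the condition that a graded projective module be generated in a prescribed degree. Therefore, given a simple graded $\C\inj$-module $L$ with $L_0=L$, its image is a simple graded $\C\cat$-module concentrated in degree $0$, which by Corollary \ref{koszcor1} (concretely, by the complex built in Theorem \ref{koszthm}) has a linear projective resolution $\mathcal{P}^\bullet$, that is, one with $\mathcal{P}^{-n}$ generated in degree $n$. Pulling $\mathcal{P}^\bullet$ back through the equivalence yields a graded projective resolution of $L$ with the same generation-degree property, i.e.\ a linear resolution; as $L$ was arbitrary, $\C\inj$ is Koszul.

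The only step requiring any care is the first one — checking that the equivalence of Corollary \ref{greqcor} really is an equivalence of graded categories and not merely of the underlying ungraded ones — since ``linear'' is a statement about the internal grading. But this is essentially immediate from the proof of Theorem \ref{quiverthm}: the Morita context is supported entirely in degree $0$, so there is no genuine obstacle, and the rest of the argument is formal.
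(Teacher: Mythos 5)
Your argument is exactly the one the paper intends: Corollary \ref{koszcor2} is deduced from Corollary \ref{koszcor1} by transporting the linear resolutions of Theorem \ref{koszthm} through the graded equivalence of Corollary \ref{greqcor}, and your check that the Morita context sits in degree $0$ (so the equivalence respects the grading, the shifts $\langle n\rangle$, and generation degrees) is the only point of substance, which the paper leaves implicit. Correct, and essentially the same approach.
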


%

%

\end{document}